\date{}
\newtheorem{theorem}{Theorem}
\newtheorem{lemma}{Lemma}
\newtheorem{corollary}{Corollary}
\newtheorem{proposition}{Proposition}
\newtheorem{remark}{Remark}
\newcommand{\Cld}{\mathrm{Cld}}
\newcommand{\Comp}{\mathrm{Comp}}
\newcommand{\Fin}{\mathrm{Fin}}
\newcommand{\diam}{\mathrm{diam}}
\newcommand{\IN}{\mathbb{N}}
\newcommand{\cl}{\mathrm{cl}}
\newcommand{\dist}{\mathrm{dist}}
\newcommand{\w}{\omega}
\begin{document}
\title
[Attouch-Wets topology on hyoerspaces]{Hyperspaces  with the
Attouch-Wets topology \\ homeomorphic to $\ell _2$}
\author{R. Voytsitskyy}
\address{Department of Mathematics, Ivan Franko Lviv National
University, Universytetska 1, Lviv, 79000, Ukraine}
\email{voytsitski@mail.lviv.ua} \subjclass{54B20, 57N20}
 \keywords{Hyperspace, Attouch-Wets topology, Hilbert space, AR}
\begin{abstract}
It is shown that the hyperspace of  all nonempty closed subsets
$\Cld_{AW}(X)$ of a separable metric space $(X,d)$ endowed with
the Attouch-Wets topology is homeomorphic to $\ell _2$ if and only
if the completion $\overline{X}$ of $X$ is proper, locally
connected and contains no  bounded connected component, $X$ is
topologically complete and not locally compact at infinity.
\end{abstract}


\maketitle 

\section {Introduction}

For a metric space $X =(X,d)$, let $C(X)$ be the set of all
continuous real valued functions on $X$ and $\Cld(X)$ be the set
of all nonempty closed subsets of $X$. Identifying each $A \in
\Cld(X)$ with the continuous function $X\ni x\mapsto d(x,A)\in
\mathbb R$, we can embed $\Cld(X)$ into the function space $C(X)$.

The function space $C(X)$ carries at least three natural
topologies: of point-wise convergence, of uniform convergence and
of uniform convergence on bounded subsets of $X$. Those three
topologies of $C(X)$ induce three topologies on the hyperspace
$\Cld(X)$: the \em Wijsman \em topology, the \em metric Hausdorff
\em topology and the \em Attouch-Wets \em topology. The hyperspace
$\Cld(X)$ endowed with one of these topologies is denoted by
$\Cld_W(X)$, $\Cld_H(X)$, and $\Cld_{AW}(X)$, respectively. The
Wijsman topology coincides with the Attouch-Wets topology if and
only if bounded subsets of $X$ are totally bounded
\cite[Theorem 3.1.4]{Be}. On the other hand, the Attouch-Wets
topology coincides with the Hausdorff metric topology if and only
if $(X,d)$ is a bounded metric space \cite[Exercise 3.2.2]{Be}.
The Hausdorff metric  topology on $\Cld_H(X)$ is generated by the
Hausdorff metric $d_H(A,B)=\sup_{x\in X}|d(x,A)-d(x,B)|$, where
$A,B\in\Cld(X)$.

In \cite[Theorem 5.3]{BKS} it is proved that for  an infinite-dimensional
Banach space $X$ of weight $w(X)$, the hyperspace $\Cld_{AW}(X)$
is homeomorphic to $(\cong)$ the Hilbert space of weight
$2^{w(X)}$. In particular, for  an
infinite-dimensional separable Banach space $X$, the hyperspace
$\Cld_{AW}(X)$ is homeomorphic to $\ell _{2}(2^{\aleph _{0}})$. On
the other hand, for each finite-dimensional normed linear space
$X$, since every bounded closed set in $X$ is compact, the
Attouch-Wets topology on $\Cld(X)$ agrees with the Fell topology
\cite [p.144]{Be}. Then, by \cite{SY},
$\Cld_{AW}(X)$ is homeomorphic to $Q\setminus \{pt\}$. Thus, for a Banach space $X$
the hyperspace $\Cld_{AW}(X)$ is either locally compact or
non-separable.
 In  \cite{BKS} the authors
asked: does there exist an unbounded metric space $X$ such that
$\Cld_{AW}(X)\cong \ell _2?$ And, more generally, what are the
necessary and sufficient conditions under which the hyperspace
$\Cld_{AW}(X)$ is homeomorphic to $\ell_2$? In this paper we answer these questions proving the following
characterization theorem.

\begin{theorem}\label{th1}
The hyperspace $\Cld_{AW}(X)$ of a metric space $X$ is
homeomorphic to $\ell _2$ if and only if the completion
$\overline{X}$ of $X$ is proper, locally connected and contains no
bounded connected component, $X$ is topologically complete  and is
not locally compact at infinity.
\end{theorem}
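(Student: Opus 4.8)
\medskip
\noindent\textbf{Sketch of a proof.}
The natural strategy is to invoke Toru\'nczyk's characterisation of $\ell_2$: a space is homeomorphic to $\ell_2$ if and only if it is a separable, topologically complete absolute retract with the discrete approximation property. Accordingly, for the ``if'' implication one has to establish that $\Cld_{AW}(X)$ is separable and topologically complete, that it is an AR, and that it enjoys the discrete approximation property; for the ``only if'' implication one has to recover the five listed conditions from these facts together with the connectedness and nowhere local compactness of $\ell_2$. A recurring device is to identify $\Cld_{AW}(X)$ with the subspace $\{F\in\Cld(\overline X):F=\cl_{\overline X}(F\cap X)\}$ of $\Cld_{AW}(\overline X)$; when $\overline X$ is proper the latter is (the nonempty part of) the Fell hyperspace $\Cld_F(\overline X)$, so the machinery of $Q$-manifolds and absorbing sets becomes available.

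For the ``if'' direction I would argue in three steps. (i) Under the hypotheses $\overline X$ is proper, hence $\sigma$-compact, so $\Cld_{AW}(\overline X)=\Cld_F(\overline X)$ is compact metrizable and $\Cld_{AW}(X)$ is separable; and since $X$ is $G_\delta$ in $\overline X$, its image is $G_\delta$ in $\Cld_F(\overline X)$, so $\Cld_{AW}(X)$ is topologically complete. (ii) $\Cld_{AW}(X)$ is an AR: the assignment sending $(F,t)$ to the closed $t$-neighbourhood of $F$ in $X$ (which equals $X$ once $t$ exceeds the diameter in the relevant component) contracts $\Cld_{AW}(X)$, so it remains to verify the ANR property, which follows from the local connectedness of $\overline X$ together with the absence of bounded components --- precisely the situation in which the Fell hyperspace is a $Q$-manifold --- and from the observation that the same neighbourhood construction pushes $\Cld_F(\overline X)$ into the image of $\Cld_{AW}(X)$, so that $\Cld_{AW}(X)$ is homotopy dense in it. (iii) The crux: $\Cld_{AW}(X)$ has the discrete approximation property. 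This is where ``$X$ is not locally compact at infinity'' is used: given countably many maps from cubes into $\Cld_{AW}(X)$ and an $\e>0$, each value can be altered by less than $\e$ (in the Attouch--Wets metric) by adjoining points placed so far out in $X$ that the alteration is small, and the failure of local compactness out there supplies, uniformly, enough mutually ``independent'' directions to render the perturbed images pairwise disjoint and locally finite. Equivalently, in the Fell picture, $\Cld_{AW}(\overline X)$ is a contractible non-compact $Q$-manifold and the complement $\Cld_{AW}(\overline X)\setminus\Cld_{AW}(X)$ --- the closed subsets of $\overline X$ not recoverable from their trace on $X$ --- is a cap set (a countable union of $Z$-sets with the compact absorption property), precisely because $X$ is topologically complete and not locally compact at infinity; then $\Cld_{AW}(X)$, being the complement of a cap set in a contractible $Q$-manifold, is homeomorphic to $\ell_2$.

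For the ``only if'' direction, assume $\Cld_{AW}(X)\cong\ell_2$. Separability forces every bounded subset of $X$ to be totally bounded: otherwise $X$ would contain a bounded, closed, $\e$-separated sequence $\{x_n\}$, and the distinct subsets of $\{x_n\}$ would form an uncountable family in $\Cld_{AW}(X)$ that is uniformly separated in the Attouch--Wets metric. Topological completeness of $\ell_2$ forces $X$ to be topologically complete, since $x\mapsto\{x\}$ is a closed embedding of $X$ into $\Cld_{AW}(X)$; combined with the previous point, bounded closed subsets of $\overline X$ are compact, i.e.\ $\overline X$ is proper. As $\Cld_{AW}(X)\cong\ell_2$ is an ANR and is homotopy dense in $\Cld_F(\overline X)$, the Fell hyperspace $\Cld_F(\overline X)$ is an ANR, which forces $\overline X$ to be locally connected. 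If $\overline X$ had a bounded connected component $K$, then $K$ is compact and clopen; when $K\ne\overline X$ the set $\{A\in\Cld(X):A\cap K=\emptyset\}$ is a nonempty proper clopen subset of $\Cld_{AW}(X)$, contradicting the connectedness of $\ell_2$, and when $K=\overline X$ the space $X$ is bounded, $\Cld_{AW}(X)=\Cld_H(X)$, and the lack of room ``at infinity'' makes the discrete approximation property fail --- so $\overline X$ has no bounded component. Finally, if $X$ were locally compact at infinity, then outside the bounded region where local compactness fails the closed sets of $X$ reproduce a chunk of the compact Fell hyperspace of the locally compact end (in the extreme case $X=\overline X$ is locally compact and $\Cld_{AW}(X)=\Cld_F(X)$ is itself compact), so $\Cld_{AW}(X)$ would be locally compact somewhere, hence at best a $Q$-manifold and never $\ell_2$; equivalently, the complement of $\Cld_{AW}(X)$ in $\Cld_F(\overline X)$ would fail the compact absorption property. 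Hence $X$ is not locally compact at infinity.

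The step I expect to be the main obstacle is step (iii) of the ``if'' direction --- verifying the discrete approximation property, equivalently showing that the complement of $\Cld_{AW}(X)$ in the Fell hyperspace of $\overline X$ is a cap set --- since one must exploit the failure of local compactness at infinity uniformly over arbitrary countable families of compacta while keeping all perturbations Attouch--Wets-small. Subsidiary difficulties are the exact criterion for a Fell hyperspace to be an ANR (used in both directions), the identification of the ``missing'' closed subsets of $\overline X$ and the proof that they form a $\sigma$-$Z$-set once $X$ is topologically complete, and the bounded-$X$ sub-case of the ``only if'' direction.
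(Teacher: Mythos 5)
Your overall architecture is recognizably parallel to the paper's (identify $\Cld_{AW}(X)$ with a subspace of a compact hyperspace of a completion, prove AR-ness, then upgrade to $\ell_2$), but the step you yourself flag as the crux --- (iii), the discrete approximation property, equivalently the cap-set property of the complement --- is left as a description of what would need to be done rather than a proof, and the missing mechanism is precisely the paper's main idea. The paper does not verify Toru\'nczyk's criterion directly. It first replaces $X$ by a \emph{bounded} one-point metric extension $\alpha X$ and proves (Proposition~\ref{embed}) that $A\mapsto A\cup\{\infty\}$ embeds $\Cld_{AW}(X)$ onto $\Cld_H(\alpha X|\{\infty\})\setminus\{\{\infty\}\}$, so that the ambient space $\Cld_H(\overline{\alpha X}|\{\infty\})$ is a genuine Hilbert cube (Lemma~\ref{cubinfty}); note that your ambient space $\Cld_F(\overline X)\cong Q\setminus\{pt\}$ is only a $Q$-manifold, so Curtis's boundary-set theorem (Lemma~\ref{tool}) would not apply as stated --- anchoring every set at $\infty$ is what compactifies the picture. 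Then, instead of constructing ``independent directions at infinity'' by hand, the paper exhibits a \emph{concrete} homotopy dense subset of the complement, namely $\Fin(\overline{\alpha X}|\{\infty\})\setminus\Fin(\alpha X|\{\infty\})$ (finite sets containing $\infty$ and at least one point of $\overline X\setminus X$), whose density uses ``not locally compact at infinity'' only through the elementary fact that $\overline X\setminus X$ must then be unbounded, and whose homotopy density is obtained by the Lawson-semilattice reduction to a relative $LC^0$ check (Lemmas~\ref{lawson} and~\ref{l6}); homotopy density of $\Cld_{AW}(X)$ itself comes from relative completeness of the subsemilattice $\{F:F=\cl(F\cap\alpha X)\}$ (Proposition~\ref{complete}). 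Without some such reduction, your ``uniformly enough mutually independent directions'' claim is not an argument.

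There are also two gaps in your ``only if'' direction. First, to get local connectedness of $\overline X$ you transfer the ANR property from $\Cld_{AW}(X)$ to the hyperspace of the completion by asserting that $\Cld_{AW}(X)$ is homotopy dense there; but in this paper homotopy density is \emph{derived from} local connectedness of the completion, which is exactly what is to be proved --- the paper instead invokes the separately established fact that mere density suffices for the A(N)R transfer (Lemma~\ref{dense} together with Proposition~\ref{denseat}, yielding Corollary~\ref{completion}), which is a nontrivial cited result, not something you can wave through. Second, your treatment of the subcase where $\overline X$ itself is a bounded component (``the lack of room at infinity makes the discrete approximation property fail'') is an assertion, not a proof; this is a genuinely delicate case, since for bounded $X$ the Attouch--Wets topology reduces to the Hausdorff metric topology and Theorem~\ref{t1} shows that $\Cld_H$ of such spaces can very well be $\ell_2$, so whatever argument one gives here must engage with that tension rather than appeal to intuition about ``room at infinity.''
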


A metric space $X$ is defined to be
\begin{itemize}

\item {\em proper} if each closed bounded subset of $X$ is compact;

\item {\em not locally compact at infinity} if no bounded subset of
$X$ has locally compact complement.
\end{itemize}

Observe that under the  conditions of Theorem~\ref{th1} the
Attouch-Wets topology coincides with the Wijsman topology (cf.
\cite[Theorem 3.1.4]{Be}). So, for free, we obtain the following

\begin{corollary}\label{wijsman}
For a  metric space $(X,d)$  the hyperspace $\Cld_W(X)$ is
homeomorphic to $\ell_2$ if the completion $\overline{X}$ of $X$
is proper, locally connected and contains no bounded component,
$X$ is topologically complete and not locally compact at infinity.
\end{corollary}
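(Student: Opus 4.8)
The plan is to deduce this statement directly from Theorem~\ref{th1} by showing that, under the listed hypotheses, the Wijsman and Attouch-Wets topologies on $\Cld(X)$ coincide, so that $\Cld_W(X)$ and $\Cld_{AW}(X)$ are literally the same topological space. By the comparison result \cite[Theorem 3.1.4]{Be} quoted in the Introduction, the Wijsman topology on $\Cld(X)$ agrees with the Attouch-Wets topology exactly when every bounded subset of $X$ is totally bounded. Thus the whole corollary reduces to checking this total boundedness condition and then invoking Theorem~\ref{th1}.

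To verify the condition I would use that the completion $\overline{X}$ is assumed to be proper. Let $B\subseteq X$ be bounded. Since $X$ is a metric subspace of $\overline{X}$, the set $B$ is bounded in $\overline{X}$ as well, hence its closure $\cl_{\overline{X}}(B)$ is a closed bounded subset of the proper space $\overline{X}$ and is therefore compact. A subset of a metric space whose closure in the completion is compact is totally bounded, so $B$ is totally bounded. Consequently all bounded subsets of $X$ are totally bounded, and \cite[Theorem 3.1.4]{Be} gives $\Cld_W(X)=\Cld_{AW}(X)$ as topological spaces.

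Finally, the hypotheses imposed on $X$ in the corollary are precisely the right-hand side of the equivalence in Theorem~\ref{th1}: the completion $\overline{X}$ is proper, locally connected and has no bounded connected component, while $X$ is topologically complete and not locally compact at infinity. Hence the ``if'' direction of Theorem~\ref{th1} applies and yields $\Cld_{AW}(X)\cong\ell_2$; combined with the identification $\Cld_W(X)=\Cld_{AW}(X)$ from the previous paragraph, this gives $\Cld_W(X)\cong\ell_2$. I do not expect any genuine obstacle here — this is the ``for free'' consequence anticipated in the remark preceding the statement — and the only point needing an argument rather than a citation is the passage from properness of $\overline{X}$ to total boundedness of bounded subsets of $X$, which is routine once one recalls that total boundedness of a set is equivalent to compactness of its closure in the completion.
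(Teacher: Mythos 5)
Your proposal is correct and follows exactly the route the paper intends: the remark preceding the corollary notes that under these hypotheses the Attouch--Wets and Wijsman topologies coincide by \cite[Theorem 3.1.4]{Be}, and the result then follows ``for free'' from Theorem~\ref{th1}. Your explicit verification that properness of $\overline{X}$ forces every bounded subset of $X$ to be totally bounded (via compactness of its closure in the completion) is the one step the paper leaves implicit, and it is carried out correctly.
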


Applying Theorem~\ref{th1} and Corollary~\ref{wijsman} to the
space $\mathbb P$  of irrational numbers  of the real line we
obtain:

\begin{corollary}
$\Cld_{AW}(\mathbb P)=\Cld_W(\mathbb P)\cong\ell _2$.
\end{corollary}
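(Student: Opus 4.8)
The plan is to obtain the corollary directly from Theorem~\ref{th1} and Corollary~\ref{wijsman} by checking that $X=\mathbb{P}$, the space of irrational numbers with the metric inherited from the real line, satisfies all the hypotheses listed there. Since $\mathbb{P}$ is dense in the complete space $\mathbb{R}$, its completion is $\overline{\mathbb{P}}=\mathbb{R}$. The line $\mathbb{R}$ is proper by the Heine--Borel theorem and is locally connected, and, being connected and unbounded, it consists of a single connected component, which is not bounded; thus $\overline{\mathbb{P}}$ is proper, locally connected, and has no bounded connected component. Moreover $\mathbb{P}=\mathbb{R}\setminus\mathbb{Q}$ is a $G_\delta$-subset of the Polish space $\mathbb{R}$ (its complement $\mathbb{Q}$ being $F_\sigma$), so $\mathbb{P}$ is topologically complete.

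The only point requiring a genuine argument is that $\mathbb{P}$ is not locally compact at infinity, that is, that $\mathbb{P}\setminus B$ is not locally compact for every bounded $B\subseteq\mathbb{P}$. First I would observe that $\mathbb{P}$ is nowhere locally compact: if some irrational $x$ had a compact neighbourhood $K$ in $\mathbb{P}$, then $K$ would be a compact, hence closed, subset of $\mathbb{R}$ with $K\supseteq\mathbb{P}\cap(x-\delta,x+\delta)$ for some $\delta>0$; passing to the closure in $\mathbb{R}$ and using the density of $\mathbb{P}$ would give $K\supseteq[x-\delta,x+\delta]$, contradicting $K\subseteq\mathbb{P}$. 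In particular every nonempty open subspace of $\mathbb{P}$ fails to be locally compact. Now, for bounded $B\subseteq\mathbb{P}$ pick $M$ with $B\subseteq[-M,M]$; then $\mathbb{P}\setminus[-M,M]$ is nonempty, open in $\mathbb{P}$, and hence open in the subspace $\mathbb{P}\setminus B$. Since an open subspace of a locally compact space is locally compact while $\mathbb{P}\setminus[-M,M]$ is not, the space $\mathbb{P}\setminus B$ cannot be locally compact.

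All hypotheses of Theorem~\ref{th1} being verified, we get $\Cld_{AW}(\mathbb{P})\cong\ell_2$. Finally, bounded subsets of $\mathbb{P}$ are totally bounded (each is contained in a bounded, hence totally bounded, subset of $\mathbb{R}$), so by \cite[Theorem~3.1.4]{Be} the Wijsman and Attouch-Wets topologies on $\Cld(\mathbb{P})$ coincide, which yields $\Cld_W(\mathbb{P})=\Cld_{AW}(\mathbb{P})\cong\ell_2$. The whole argument is routine; the only slightly delicate ingredient is the nowhere-local-compactness of $\mathbb{P}$, combined with the hereditarity of local compactness to open subspaces.
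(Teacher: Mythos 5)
Your proof is correct and follows exactly the route the paper intends: the corollary is stated there as an immediate application of Theorem~\ref{th1} and Corollary~\ref{wijsman}, and your verification of the hypotheses for $X=\mathbb P$ (completion $\mathbb R$ proper, locally connected, with no bounded component; $\mathbb P$ Polish; nowhere local compactness of $\mathbb P$ plus hereditarity of local compactness to open subspaces giving non-local-compactness at infinity) is precisely the routine check the paper leaves to the reader.
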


As a by-product of the proof of Theorem~\ref{th1} we obtain the
following characterization of metric spaces whose hyperspaces with
the Attouch-Wets topology are separable absolute retracts.

\begin{theorem}\label{th2} The hyperspace $\Cld_{AW}(X)$ of a
metric space $X$ is a separable absolute retract if and only
 if the completion $\overline X$ of $X$ is proper,
  locally connected and contains no bounded connected
 component.
\end{theorem}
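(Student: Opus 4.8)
The plan is to prove the two implications in turn, using that the three conditions on $\overline X$ — and hence, as the statement implies, the separable‑AR property of $\Cld_{AW}(X)$ — are properties of the completion alone.

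\textbf{Necessity.} Assume $H:=\Cld_{AW}(X)$ is a separable AR. First, separability forces $\overline X$ to be proper: were some closed bounded subset of the complete space $\overline X$ non‑compact, it would fail to be totally bounded, producing an $\e$‑separated sequence lying inside a single ball of $X$ and so — after thinning it to a closed‑discrete sequence in $X$ — an uncountable, uniformly Attouch‑Wets‑separated family $\{\cl_X\{x_n:n\in S\}:S\subseteq\IN\}$ in $H$; this is just the standard fact that $\Cld_{AW}(X)$ is separable metrizable precisely when the bounded subsets of $X$ are totally bounded, i.e. when $\overline X$ is proper. Since an AR is connected and locally connected, local connectedness of $\overline X$ then follows from the known description of local connectedness of an Attouch‑Wets hyperspace in terms of that of (the completion of) the base space, which I would include or cite. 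Finally, if $\overline X$ had a bounded connected component $K$, then $K$ is compact (properness) and clopen (local connectedness), so $K\cap X$ is a nonempty proper clopen subset of $X$ with $\dist\!\big(K\cap X,\,X\setminus(K\cap X)\big)>0$; since $K$ is compact, a short computation with Attouch‑Wets convergence shows that $\{A\in H:\cl_{\overline X}A\cap K=\emptyset\}$ and $\{A\in H:\cl_{\overline X}A\cap K\ne\emptyset\}$ are both open and nonempty, so $H$ would be disconnected, contradicting that it is an AR. Hence $\overline X$ has no bounded component.

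\textbf{Sufficiency.} Assume $\overline X$ is proper, locally connected and has no bounded component. Separability and metrizability of $H=\Cld_{AW}(X)$ are immediate, properness making bounded subsets of $X$ totally bounded and $\overline X$, hence $X$, separable. The substance is that $H$ is an AR, which I would derive from the criterion that a contractible ANR is an AR. For the ANR property I exploit the scale structure of the Attouch‑Wets topology: fixing a base point $x_0$ and writing $\overline B_n$ for the closed $n$‑ball of $\overline X$ (compact, by properness), $H$ is the inverse limit of the compacta $\mathcal D_n=\{\,d(\cdot,A)|_{\overline B_n}:A\in\Cld(X)\,\}\subseteq C(\overline B_n)$ under the restriction bonding maps; each $\mathcal D_n$ is a compact AR, arising from the hyperspace of a compact locally connected subset of $\overline X$ — here the hypotheses on $\overline X$ (local connectedness, together with ``no bounded component'' to exclude degenerate isolated compact pieces) supply the needed structure, and Curtis–Schori–West‑type theorems give the AR conclusion — and the bonding maps are soft enough that the inverse limit is an ANR; a density argument carries this from $\overline X$ to a general dense $X$. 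For contractibility I would use an outward‑enlargement homotopy $A\mapsto N_r(A)=\{x\in X:d(x,A)\le r\}$ with $r\to\infty$ and terminal value $X$: the absence of bounded components is exactly what makes $N_r(A)\to X$ in the Attouch‑Wets topology, while local connectedness of $\overline X$ is what controls the continuity of $r\mapsto N_r(A)$. Being a contractible ANR, $H$ is then an AR.

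\textbf{Where the difficulty lies.} The main obstacle is the ANR property of $H$ at its ``large'' points — unbounded closed sets, and closed sets with intricate local Attouch‑Wets behaviour — which is not formally delivered by the compact‑ball analysis and requires establishing local contractibility uniformly over all of $\Cld_{AW}(X)$, together with a careful treatment of how the inverse‑limit presentation over the $\overline B_n$ interacts with the softness of the bonding maps. A second, related difficulty is that the Attouch‑Wets topology depends on the metric and not merely on the topology of $X$, so the enlargement homotopy cannot simply be transported to an equivalent geodesic‑type metric; instead one must extract from ``proper $+$ locally connected $+$ no bounded component'' enough connectivity of the actual metric balls, and the final passage from $\overline X$ to a non‑complete dense subspace $X$ — where $A\mapsto\cl_{\overline X}A$, though an embedding of hyperspaces, has a complicated image — has to be carried out directly.
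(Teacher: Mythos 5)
The necessity half of your plan is broadly workable for unbounded $X$: properness from separability is exactly the cited fact from \cite{BKS}, and your disconnection argument for a bounded (hence compact, clopen) component is sound. But you dispose of local connectedness of $\overline X$ by appeal to an unnamed ``known description'', and that is not a throwaway step: in the paper it requires first transferring the AR property from $\Cld_{AW}(X)$ to $\Cld_{AW}(\overline X)$ (Corollary~\ref{completion}, which rests on the nontrivial density-invariance results Proposition~\ref{denseat} and Lemma~\ref{dense}), and only then identifying the Attouch--Wets topology on the proper space $\overline X$ with the Fell topology so that the Sakai--Yang characterization \cite{SY} applies.

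The genuine gap is in sufficiency. Your ANR argument hinges on the assertion that the bonding maps of the inverse system $(\mathcal D_n)$ are ``soft enough that the inverse limit is an ANR''; no such general principle exists --- inverse limits of compact ARs along surjective, even open, bonding maps are routinely not locally connected (inverse limits of arcs already produce Knaster-type continua) --- so this is precisely the point at which a theorem is needed and none is supplied. The enlargement homotopy $A\mapsto N_r(A)$ has the same status: its continuity in the Attouch--Wets topology is exactly what fails for general metrics, and ``local connectedness controls it'' is a hope, not an argument. The paper avoids both problems with a single construction you do not have: a \emph{bounded} one-point metric extension $\alpha X=X\cup\{\infty\}$ for which $A\mapsto A\cup\{\infty\}$ is an embedding of $\Cld_{AW}(X)$ onto $\Cld_H(\alpha X|\{\infty\})\setminus\{\infty\}$ (Proposition~\ref{embed}), thereby converting the Attouch--Wets problem into a Hausdorff-metric one. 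Under the hypotheses, $\overline{\alpha X}$ is a Peano continuum (Lemma~\ref{key}), the Curtis--Schori theorem gives $\Cld_H(\overline{\alpha X}|\{\infty\})\cong Q$ (Lemma~\ref{cubinfty}), and the Lawson-semilattice criterion (Proposition~\ref{complete}) shows $\Cld_H(\alpha X|\{\infty\})$ is homotopy dense there; since a homotopy dense subset of $Q$ minus a point is an AR, both the ANR property and contractibility of $\Cld_{AW}(X)$ drop out simultaneously. Without this reduction your sufficiency argument does not close.
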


Our Theorems~\ref{th1} and ~\ref{th2} are ``Attouch-Wets''
counterparts  of the following two results from  \cite{BV$_2$}:

\begin{theorem}\label{t1} The hyperspace $\Cld_H(X)$ of a metric
space $(X,d)$ is  homeomorphic to $\ell_2$ if and only if $X$ is a
topologically complete nowhere locally compact space and the
completion $\overline{X}$ of $X$ is compact, connected, and
locally connected.
\end{theorem}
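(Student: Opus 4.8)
\medskip

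\noindent\textbf{Proof plan for Theorem~\ref{t1}.} The proof runs through Toru\'{n}czyk's topological characterization of Hilbert space: a nonempty, separable, completely metrizable AR which is nowhere locally compact and has the strong discrete approximation property is homeomorphic to $\ell_2$, and $\ell_2$ itself has all of these properties. Two elementary facts are used throughout: the closure map $A\mapsto\cl_{\overline X}(A)$ embeds $\Cld_H(X)$ isometrically onto $\mathcal G:=\{K\in\Cld_H(\overline X):K=\cl_{\overline X}(K\cap X)\}$, with $X$ dense in $\overline X$; and the singletons form a closed copy of $X$ inside $\Cld_H(X)$, being the zero set of the continuous function $\diam$. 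For necessity, assume $\Cld_H(X)\cong\ell_2$. Separability of $\ell_2$ gives separability of $\Cld_H(X)$; since an infinite $\e$-separated subset of $X$ produces a continuum-sized $\e$-separated family of finite subsets in $\Cld_H(X)$, this forces $X$ totally bounded, hence $\overline X$ compact (so $\Cld_H(\overline X)$ is the hyperspace of compacta $2^{\overline X}$). Complete metrizability of $\ell_2$ descends to the closed subspace of singletons, so $X$ is topologically complete. If $X$ were locally compact at a point $x$, then for small $\e$ the closed $\e$-ball about $\{x\}$ in $\Cld_H(X)$ would equal $\Cld_H(\{y\in X:d(y,x)\le\e\})$, the hyperspace of a compact space, hence be compact --- impossible in $\ell_2$; so $X$ is nowhere locally compact. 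Finally, connectedness and local connectedness of $\ell_2$ force the same for $\overline X$: a clopen splitting $\overline X=A\sqcup B$ (then $\dist(A,B)>0$) would make $\Cld_H(X\cap A)$ a proper nonempty clopen subset of $\Cld_H(X)$, and a failure of local connectedness of $\overline X$ at a point yields, by the standard hyperspace argument behind ``$2^Y$ locally connected $\Rightarrow$ $Y$ locally connected'', a point of $\Cld_H(X)$ without a connected neighborhood.

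For sufficiency, assume $\overline X$ is a Peano continuum (nondegenerate, since $X$ is not locally compact) and $X$ is topologically complete and nowhere locally compact; fix a convex metric on $\overline X$ (Bing--Moise) --- harmless, as the hyperspace topology on the compact space $\overline X$ is the Vietoris one regardless --- so that the enlargements $N_t(K)=\{y\in\overline X:d(y,K)\le t\}$ depend continuously on $(K,t)\in 2^{\overline X}\times[0,\infty)$, and by Curtis--Schori--West $2^{\overline X}\cong Q$, in particular an AR. First, $\Cld_H(X)$ is completely metrizable: identified with $\mathcal G$, a routine computation --- using that $X$ is $G_\delta$ in $\overline X$ and that $\{K:K\cap E\ne\emptyset\}$ is $G_\delta$ in $2^{\overline X}$ whenever $E$ is $G_\delta$ in $\overline X$ --- shows $\mathcal G$ is $G_\delta$ in $2^{\overline X}$, hence Polish. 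Second, $\Cld_H(X)$ is an AR: convexity of the metric and density of $X$ give $\cl_{\overline X}(N_t(K)\cap X)=N_t(K)$ for $t>0$, so $h(K,t):=N_t(K)$ for $t>0$ and $h(K,0):=K$ defines a continuous homotopy of $2^{\overline X}$ from the identity with $h_t(2^{\overline X})\subseteq\mathcal G$ for all $t>0$; thus $\Cld_H(X)$ is homotopy dense in the AR $2^{\overline X}$, and a homotopy dense subset of an AR is an AR. Third, $\Cld_H(X)$ is nowhere locally compact: given $A$ and $\e>0$, pick $a\in A$ and $\e'<\e$ with $F:=\{x\in X:d(x,a)\le\e'\}$ non-compact (as $X$ is not locally compact at $a$); $F$ is totally bounded, so some $p\in\cl_{\overline X}(F)\setminus X$, say $p=\lim x_k$ with $x_k\in F$, and then the sets $A\cup\{x_k\}$ lie in the closed $\e$-ball about $A$ while their only limit in $2^{\overline X}$, namely $\cl_{\overline X}(A)\cup\{p\}$, is not the closure of its trace on $X$; so that ball has no convergent subsequence and is not compact.

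The remaining and hardest step is to verify the strong discrete approximation property for $\Cld_H(X)$: given maps $f_n\colon I^n\to\Cld_H(X)$ and an open cover, one must approximate them by maps $g_n$ whose images $\{g_n(I^n)\}$ form a discrete family of $Z$-sets. The plan is to put $g_n(z)=f_n(z)\cup\varphi_n(z)$, where $\varphi_n$ attaches to $f_n(z)$ a tiny, highly non-compact ``brush'' whose fine combinatorial structure depends on $z$ and on $n$; the union operation is continuous and idempotent on $\Cld_H(X)$, the enlargement homotopy above keeps the perturbation within the prescribed cover, and --- this is where ``$X$ nowhere locally compact'' is used in full strength --- near every point and at every scale there is room to build such brushes and to make those attached over distinct cubes disjoint and the whole family discrete; the delicate part is to check that the family so produced really is discrete and really consists of $Z$-sets. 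Granting this, $\Cld_H(X)$ is a separable completely metrizable AR, nowhere locally compact, with the strong discrete approximation property, and Toru\'{n}czyk's characterization gives $\Cld_H(X)\cong\ell_2$.
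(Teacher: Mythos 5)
This theorem is quoted in the paper from \cite{BV$_2$} without proof, so I measure your proposal against the machinery the paper builds for its own Theorem~\ref{th1}, of which Theorem~\ref{t1} is the model case. The decisive problem is that your sufficiency argument is not actually completed: the entire content of the implication ``$X$ nowhere locally compact $\Rightarrow\Cld_H(X)\cong\ell_2$'' is concentrated in the verification of the discrete approximation property, and at exactly that point you switch from proof to plan (``the delicate part is to check that the family so produced really is discrete and really consists of $Z$-sets. Granting this\dots''). Attaching ``brushes'' $\varphi_n(z)$ to $f_n(z)$ is not a construction until you specify the brushes, prove continuity of $z\mapsto f_n(z)\cup\varphi_n(z)$, keep the perturbation cover-limited, and --- the hard part --- show the images form a discrete family of $Z$-sets in $\Cld_H(X)$; this requires a quantitative, uniform use of non-local-compactness at every scale and is precisely where such proofs live or die. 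So there is a genuine gap. A secondary, fixable slip: in your argument that $\Cld_H(X)$ is nowhere locally compact, the limit $\cl_{\overline X}(A)\cup\{p\}$ may well belong to $\{K:K=\cl(K\cap X)\}$ when $p\in\cl_{\overline X}(A)$, in which case $A\cup\{x_k\}$ does converge in $\Cld_H(X)$ (to $A$); you must arrange $p\notin\cl_{\overline X}(A)$ or argue otherwise. The necessity half is essentially sound, though the local-connectedness step is only gestured at.

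The gap can be closed, but the efficient route is the one this paper uses for Theorem~\ref{th1}, which bypasses Toru\'nczyk's criterion entirely via Curtis's boundary-set lemma (Lemma~\ref{tool}). Identify $\Cld_H(X)$ with $\mathcal G=\{K\in\Comp(\overline X):K=\cl(K\cap X)\}$ inside $\Comp(\overline X)\cong Q$ (Curtis--Schori). Then $\mathcal G$ is a dense, relatively complete subsemilattice, hence homotopy dense by Proposition~\ref{complete}, and it is $G_\delta$ because $X$ is topologically complete; its complement contains $L=\Fin(\overline X)\setminus\Fin(X)$, which is dense in $\Comp(\overline X)$ because nowhere local compactness of $X$ forces $\overline X\setminus X$ to be dense in $\overline X$, and which is relatively $LC^0$ by Lemma~\ref{l6}, hence homotopy dense by Lemma~\ref{lawson}. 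Lemma~\ref{tool} then yields $\Cld_H(X)\cong\ell_2$ at once: Curtis's lemma packages exactly the $Z$-set and discreteness work that your proposal leaves open.
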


\begin{theorem}\label{t2} The hyperspace $\Cld_H(X)$ of a  metric space $X$
 is a separable absolute retract
 if and only if the completion $\overline X$ of $X$ is compact,
  connected and locally connected.
\end{theorem}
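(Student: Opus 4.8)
The plan is to transport the whole question to the hyperspace $2^{\overline X}:=\Cld_H(\overline X)$ of the completion, which is a \emph{compact} metric space exactly in the relevant situation, and then to invoke the Curtis--Schori--West theorem.

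\emph{Reduction to the compact case.} First, $\Cld_H(X)$ is separable if and only if $\overline X$ is compact: if $\overline X$ is not compact then $X$ carries an infinite $\e$-separated set $S$, and the nonempty closed subsets of $S$ form an uncountable $\e$-separated family in $(\Cld_H(X),d_H)$; conversely, if $\overline X$ is compact then $X$ is totally bounded, and every closed $A\subseteq X$ is $d_H$-approximated to within $\e$ by $\{f\in F:d(f,A)<\e\}$ for a fixed finite $\e$-net $F\subseteq X$, so $\Cld_H(X)$ is totally bounded, hence separable. Since each side of the theorem entails separability of $\Cld_H(X)$, we may assume $\overline X$ is compact. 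Then $A\mapsto\cl_{\overline X}A$ is an isometric embedding of $\Cld_H(X)$ onto
\[
\mathcal D=\{B\in 2^{\overline X}:\cl_{\overline X}(B\cap X)=B\}\subseteq 2^{\overline X},
\]
because $d(\cdot,A)$ and $d(\cdot,\cl_{\overline X}A)$ agree as continuous functions on $\overline X$ and hence have equal suprema over the dense set $X$.

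\emph{Homotopy density.} Next I would show that $\mathcal D$ is \emph{homotopy dense} in $2^{\overline X}$, i.e.\ that there is a homotopy $h:2^{\overline X}\times[0,1]\to 2^{\overline X}$ with $h_0=\mathrm{id}$ and $h_t(2^{\overline X})\subseteq\mathcal D$ for $t>0$. The natural candidate is the thickening inside $X$,
\[
h_t(B)=\cl_{\overline X}\{x\in X:d(x,B)<t\}\qquad(t>0),
\]
for which $B\subseteq h_t(B)$, $d_H(h_t(B),B)\le t$, and $h_t(B)\in\mathcal D$ are all immediate (the trace of $h_t(B)$ on $X$ contains the dense-in-$h_t(B)$ set $\{x\in X:d(x,B)<t\}$), and joint continuity at $t=0$ follows from $d_H(h_t(B),B)\le t$. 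The one genuine difficulty — what I expect to be the main obstacle — is joint continuity for $t>0$: the set $\{x\in X:d(x,B)<t\}$ can jump as $t$ crosses a value at which $d(\cdot,B)$ is locally constant, so the formula must be replaced by a slightly more careful one; this continuity adjustment is a standard but not entirely trivial device in the hyperspace literature. Once it is done, the remainder is soft.

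\emph{Conclusion.} Two known facts then finish the proof. (a) If $A$ is homotopy dense in a metrizable space $Y$, then $A$ and $Y$ are homotopy equivalent (the retraction $h_1\colon Y\to A$ and the inclusion are homotopy inverse) and $A$ is an ANR, respectively an AR, if and only if $Y$ is; hence $\Cld_H(X)\cong\mathcal D$ is a separable AR if and only if $2^{\overline X}$ is a compact AR. (b) For a compact metric space $\overline X$, $2^{\overline X}$ is an AR if and only if $\overline X$ is connected and locally connected: the ``if'' part is the Curtis--Schori--West theorem (then $2^{\overline X}\cong Q$; the case $\overline X=\{*\}$ being trivial), while the ``only if'' part is classical hyperspace theory — $2^{\overline X}$ connected forces $\overline X$ connected, since otherwise the nonempty proper family of closed sets meeting both clopen halves of $\overline X$ is clopen in $2^{\overline X}$, and $2^{\overline X}$, being an ANR, is locally connected, which forces $\overline X$ locally connected. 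Combining (a), (b) and the reduction above yields: $\Cld_H(X)$ is a separable AR $\iff$ $\overline X$ is compact, connected and locally connected.
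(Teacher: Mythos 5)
Your reduction to the case of compact $\overline X$ and the isometric identification of $\Cld_H(X)$ with $\mathcal D=\{B\in\Cld_H(\overline X):\cl(B\cap X)=B\}$ are correct, and your step (b) is classical (Curtis--Schori together with Wojdys\l awski). The genuine gap is the claim that $\mathcal D$ is homotopy dense in $\Cld_H(\overline X)$ for an \emph{arbitrary} compact completion $\overline X$. You flag the joint continuity of the thickening homotopy as a repairable technicality, but the statement itself is false in general, and you need it precisely in the ``only if'' direction, where $\overline X$ is not yet known to be connected or locally connected. Concretely, take $\overline X=\{0\}\cup\{1/n:n\in\IN\}$ and $X=\{1/n:n\in\IN\}$. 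The hyperspace $\Cld_H(\overline X)$ has a base of clopen sets (Vietoris basic sets built from clopen subsets of $\overline X$), hence is zero-dimensional, so every path in it is constant; consequently any homotopy with $h_0=\mathrm{id}$ satisfies $h_t=\mathrm{id}$ for all $t$, while $\{0\}\notin\mathcal D$. Thus $\mathcal D$ is dense but not homotopy dense, and no ``more careful formula'' for $h_t$ can exist. The statement you actually need --- that $\Cld_H(X)$ is an A(N)R if and only if $\Cld_H(\overline X)$ is --- is exactly Lemma~\ref{dense} of the present paper, quoted from \cite{BV$_1$} as a nontrivial fact; it is proved there by quite different means and is not a consequence of homotopy density, so your argument does not establish the ``only if'' half.

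The ``if'' direction of your outline does work and matches the intended route: once $\overline X$ is a Peano continuum, $\mathcal D$ is a dense, relatively complete subsemilattice of the compact, connected, locally connected Lawson semilattice $\Cld_H(\overline X)$, so Proposition~\ref{complete} yields homotopy density (local connectedness of $\overline X$ is exactly what repairs the discontinuity of your thickening), and Curtis--Schori gives $\Cld_H(\overline X)\cong Q$, whence $\mathcal D$ is an AR. Two smaller points: homotopy equivalence alone does not transfer the ANR property (every contractible space is homotopy equivalent to a point), so your parenthetical justification of (a) via homotopy inverses is insufficient even where (a) is true --- one must invoke the local character of homotopy density; and note that the present paper does not prove Theorem~\ref{t2} at all but quotes it from \cite{BV$_2$}, so the comparison above is with the machinery (Lemma~\ref{dense}, Proposition~\ref{complete}, Curtis--Schori) that the paper assembles for its Attouch--Wets analogues.
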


\section{Topology of Lawson semilattices and some auxiliary facts}

Theorem~\ref{th2} will be derived from a more general result
concerning Lawson semilattices.
 By a {\em topological semilattice} we understand
  a pair $(L,\vee)$ consisting of a topological space $L$ and a continuous
   associative commutative idempotent operation $\vee:L\times L\to L$.
    A topological semilattice $(L,\vee)$ is a {\em Lawson semilattice} if open
     subsemilattices form a base of the topology of $L$.
A typical example of a Lawson semilattice is the hyperspace
$\Cld_H(X)$
 endowed with the operation of union $\cup$.

 Each semilattice $(L,\vee)$ carries a natural partial order: $x\le y$ iff $x\vee y=y$.
A semilattice $(L,\vee)$ is called {\em complete} if each subset
$A\subset L$ has the smallest upper bound $\sup A\in L$. It is
well-known (and can be easily proved) that each compact
topological semilattice is complete.

\begin{lemma}\label{lowloc} If $L$ is a locally compact Lawson semilattice, then
 each compact subset $K\subset L$ has the smallest upper bound $\sup K\in L$.
  Moreover, the map $\sup:\Comp(L)\to L$, $\sup:K\mapsto\sup K$, is a continuous
   semilattice homomorphism. Also for every subset $A\subset L$ with compact
   closure $\overline A$
    we have $\sup A=\sup\overline A$.
\end{lemma}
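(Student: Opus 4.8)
My plan is to reduce everything to the compact case, where completeness is already known. The first step is to show that every compact $K\subset L$ has a least upper bound. Because $L$ is Lawson, every point of $L$ has a neighborhood base consisting of open subsemilattices; in a locally compact Lawson semilattice one can therefore choose, around any point, a neighborhood base of compact subsemilattices. (Take an open subsemilattice $U$ with compact closure; then $\cl U$ need not be a subsemilattice, but one can intersect with a smaller open subsemilattice sitting inside $\cl U$ — I would verify that the closure of an open subsemilattice that is contained in a compact subsemilattice is itself a compact subsemilattice, using continuity of $\vee$.) Given a compact $K$, cover it by finitely many such compact subsemilattices $S_1,\dots,S_n$ and let $S$ be the subsemilattice generated by $S_1\cup\cdots\cup S_n$; continuity of $\vee$ plus compactness of the $S_i$ shows $S$ is compact (it is the image of a finite union of products $S_{i_1}\times\cdots\times S_{i_k}$, $k\le n$, under iterated $\vee$). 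A compact topological semilattice is complete, so $\sup_S K$ exists in $S$; and since $S$ is closed in $L$ and any upper bound of $K$ in $L$ that lies below some element of $S$ must dominate $\sup_S K$, a short argument with the partial order shows $\sup_S K$ is in fact the least upper bound of $K$ in all of $L$. This gives $\sup K\in L$.

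The second step is continuity of $\sup\colon\Comp(L)\to L$, where $\Comp(L)$ carries the Vietoris (equivalently, on a metrizable $L$, the Hausdorff-metric) topology. I would argue locally: fix $K_0$ with $s_0=\sup K_0$, and fix an open subsemilattice neighborhood $V$ of $s_0$. Using local compactness pick a compact subsemilattice $S$ with $K_0\subset S$ and $s_0\in\operatorname{int}S$; arrange moreover (shrinking if needed) that $\sup_S$ maps a Vietoris-neighborhood of $K_0$ inside $S$ — here I use that $\sup$ on the compact semilattice $S$ is continuous, which is the standard fact that on a compact (hence complete) topological semilattice the sup-map is continuous. Then for all $K$ in a Vietoris neighborhood of $K_0$ with $K\subset S$ we get $\sup K=\sup_S K$ close to $s_0$; the condition $K\subset S$ is itself an open condition in the Vietoris topology since $S$ is a neighborhood of $K_0$ and $K_0$ is compact. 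That $\sup$ is a semilattice homomorphism, i.e. $\sup(K\cup K')=\sup K\vee\sup K'$, is immediate from the universal property of least upper bounds and needs no topology.

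The third step, $\sup A=\sup\overline A$ for $A$ with compact closure, follows from step one applied to the compact set $\overline A$ together with the observation that any upper bound of $A$ is an upper bound of $\overline A$: indeed, if $x$ dominates every $a\in A$ then the set $\{y\in L: y\le x\}=\{y: y\vee x=x\}$ is closed (preimage of $\{x\}$ under $y\mapsto y\vee x$, using that $L$ is Hausdorff as a Lawson semilattice), contains $A$, hence contains $\overline A$. So $A$ and $\overline A$ have the same upper bounds and therefore the same least one.

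The main obstacle I anticipate is the very first point: manufacturing a \emph{compact subsemilattice} containing a prescribed compact set. Openness of subsemilattices does not by itself give closed (compact) subsemilattices, so one must pass from an open subsemilattice $U$ with $\cl U$ compact to an honest compact subsemilattice, and then check that finitely many of these generate a compact subsemilattice. This is where continuity of $\vee$ and the Lawson property must be combined carefully; everything after that is bookkeeping with the partial order and the Vietoris topology.
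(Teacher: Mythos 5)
Your plan is correct and is essentially the paper's proof: the paper disposes of this lemma in one line by saying it ``easily follows from its compact version proved by J.~Lawson,'' and your reduction to compact subsemilattices (closures of open subsemilattices with compact closure, finitely many of which generate a compact subsemilattice containing a given compact set, on which Lawson's compact-case completeness and continuity apply and whose local sup is checked to be the global one) is exactly the intended argument. The one obstacle you flag is not actually one: the closure of any subsemilattice is automatically a subsemilattice, since $\vee(\overline U\times\overline U)=\vee(\overline{U\times U})\subset\overline{\vee(U\times U)}\subset\overline U$ by continuity of $\vee$, so local compactness plus the Lawson property immediately yields a base of compact subsemilattices.
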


This lemma easily follows from its compact version proved by J.
Lawson in \cite{Law}.

In Lawson semilattices many geometric questions reduce to the
one-dimensional level. The following fact illustrating this
phenomenon is proved in \cite{KSY}.

\begin{lemma}\label{lawson}
Let $X$ be a dense subsemilattice of a metrizable Lawson
semilattice $L$.
 If $X$ is relatively $LC^0$ in $L$ (and $X$ is
path-connected), then $X$ and $L$ are ANRs (ARs) and $X$ is
homotopy dense in $L$.
\end{lemma}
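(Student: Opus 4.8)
The plan is to isolate the purely combinatorial role of the operation $\vee$ — upgrading local $0$-connectedness to local $n$-connectedness in all dimensions — and then feed this into the standard recognition theory for ANRs. Concretely, I would first show that the hypothesis ``$X$ is relatively $LC^0$ in $L$'' automatically forces ``$X$ is relatively $LC^n$ in $L$ for every $n$''. Granting that, $L$ and $X$ are ANRs by the partial--realisation (Hanner/Dugundji) characterisation, $X$ is homotopy dense in $L$ because it is a dense relatively $LC^\infty$ subset of an ANR, and a homotopy dense subset of an ANR is again an ANR; the parenthetical $(\mathrm{AR})$ assertion then reduces, via Whitehead's theorem, to checking that the path--connected Lawson semilattice $L$ has trivial homotopy groups.

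The heart of the argument is the dimension--raising step, and this is exactly where $\vee$ is used. Fix $a\in L$ and a neighbourhood $U$ of $a$; since $L$ is a Lawson semilattice, shrink $U$ to an open subsemilattice, and then, using the relative $LC^0$ hypothesis together with the density of $X$, choose a smaller neighbourhood $V$ so that any two points of $X\cap V$ are joined by a path inside $X\cap U$. Now consider a map $f\colon S^k\to X\cap V$. Triangulate $S^k$ very finely and extend $f$ over the cone $B^{k+1}$ one skeleton at a time. The crucial special case is the extension over a simplex $\sigma$ whose entire boundary is already mapped into a small subset of $X\cap U$: here one passes to a fine subdivision of $\partial\sigma$ and forms the \emph{finite} join $q$ of the images of its vertices, which lies in $X\cap U$ since $U$ and $X$ are subsemilattices; by joint continuity and idempotence of $\vee$ the point $q$ is close to each of those vertex images, hence (fineness) to the whole image of $\partial\sigma$, and it lies \emph{above} all of them in the order $\le$. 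The operation $\vee$ then produces short paths climbing from $f|_{\partial\sigma}$ up to the ``ceiling'' $q$ — along an edge $e$ one uses the already--built path $f|_e$ together with the assignment $t\mapsto f|_e(t)\vee(\text{similar paths from the other vertices})$, which begins at $f|_e(0)$, ends at $q$, and stays in the subsemilattice $U$ — and these are assembled into a filling $\sigma\to X\cap U$. Running this through all simplices realises $f$ as the boundary of a map $B^{k+1}\to X\cap U$; that is, $X$ is relatively $LC^k$ in $L$.

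Read as an algorithm for extending a partial realisation of a simplicial complex relative to a fine (first subsemilattice--, then $LC^0$--) refinement of a given open cover to a full realisation relative to the original cover, the very same construction shows directly that $L$ and $X$ are ANRs. Homotopy density of $X$ in $L$ then follows from the general fact that a dense relatively $LC^\infty$ subset of an ANR is homotopy dense; since $X$ is homotopy dense it is itself an ANR, and $L$ is path--connected as soon as $X$ is. For the $(\mathrm{AR})$ case one checks $\pi_n(L)=0$: approximate a map $S^n\to L$ by one carrying each simplex of a fine triangulation into a small subsemilattice, let $q$ be the finite join of all the vertex images, and contract toward $q$ by the same climbing--with--$\vee$ device applied globally; by Whitehead's theorem $L$ is then contractible, hence an AR, and so is the homotopy dense subset $X$.

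The step I expect to be the real obstacle is the one place where the absence of local compactness bites. In a \emph{compact} (or even locally compact) Lawson semilattice one would simply take $q=\sup f(\partial\sigma)$, the supremum of a compact set, which is available by Lemma~\ref{lowloc}; here no such supremum need exist, so $q$ must be approximated by finite joins over ever--finer finite subsets, and one must organise an honest convergence/telescoping argument — controlled by a modulus of uniform continuity of $\vee$ near the diagonal of the relevant compact pieces, threaded through the two nested covers — to turn the resulting sequence of ``almost fillings'' into a genuine one. Carrying out that bookkeeping carefully is the technical core of the proof; the reductions around it (the Lawson shrinking, the ANR recognition, the passage from homotopy density to $X$ being an ANR/AR) are routine.
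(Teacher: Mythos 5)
First, a point of reference: the paper does not prove Lemma~\ref{lawson} at all --- it is quoted as a known result from \cite{KSY} --- so your argument is to be measured against the proof in that reference rather than against anything in this text. Your overall architecture (use the join $\vee$ together with open subsemilattice neighbourhoods to upgrade relative $LC^0$ to relative $LC^n$ for all $n$, then invoke the standard ANR recognition theorems, the characterization of homotopy dense subsets of ANRs, and Whitehead's theorem for the AR case) is exactly the strategy of \cite{KSY} and of \cite{CN} before it, so the plan is the right one.

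The gap sits in the one step that carries all the weight: the filling of a simplex. As written, your climbing path $t\mapsto f|_e(t)\vee(\text{similar paths from the other vertices})$ does not begin at $f|_e(0)$: at $t=0$ the join is $f(v_0)\vee(\text{the initial points of those other paths})$, and idempotence collapses this to $f(v_0)$ only if every one of those paths is chosen to start at $f(v_0)$ --- a choice you never specify, and one which must moreover be made coherently across all faces of all simplices so that the fillings of adjacent simplices agree on their common faces. That coherence is the actual content of the lemma: it is an induction on skeleta in which each $m$-simplex is coned off to the \emph{finite} join of its vertex images by reparametrized joins of the already-constructed fillings of its proper faces, in the spirit of the formula $\gamma(\max\{0,2t-1\})\vee\gamma(\min\{2t,1\})$ that appears in the proof of Lemma~\ref{l6}; your sketch gestures at this but does not carry it out. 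Finally, your closing paragraph identifies the wrong obstacle: no suprema of compact sets and no convergence or telescoping argument are needed anywhere, precisely because the skeletal induction only ever forms finite joins of vertex images --- which is why the lemma holds for arbitrary metrizable Lawson semilattices with no local compactness or completeness hypothesis. So the proposal is a correct outline of the known proof, but its core construction is left undone, and the part you single out as ``the technical core'' is a detour the real proof never has to take.
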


A subset $Y\subset X$ is defined to be {\em relatively $LC^0$} in
$X$ if for every $x\in X$ each neighborhood $U$ of $x$  in $X$
contains a smaller neighborhood $V$ of $x$ such that every two
points of $V\cap Y$ can be joined by a path in $U\cap Y$.

Under a suitable completeness condition, the density of a
subsemilattice
 is equivalent to the homotopical density. A subset $Y$ of a
 topological space $X$ is {\em homotopy dense} in $X$ if there is
 a homotopy $(h_t)_{t\in[0,1]} : X\rightarrow X$ such that
 $h_0=id$ and $h_t(X)\subset Y$ for every $t>0$.

A subsemilattice $X$ of semilattice $L$ is defined to be {\em
relatively complete}
 in $L$ if for any subset $A\subset X$ having the smallest upper
  bound $\sup A$ in $L$ this bound belongs to $X$.

\begin{proposition}\label{complete} Let $L$ be a locally compact locally connected
 Lawson semilattice. Each dense relatively complete subsemilattice $X\subset L$ is homotopy
  dense in $L$.
\end{proposition}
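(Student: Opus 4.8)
The plan is to apply Lemma~\ref{lawson}, so it suffices to show that the dense relatively complete subsemilattice $X\subset L$ is relatively $LC^0$ in $L$ and path-connected; Lemma~\ref{lawson} then yields that $X$ is homotopy dense in $L$. First I would establish path-connectedness of $X$. Fix $x,y\in X$. Since $L$ is locally connected and locally compact, for any $a\in L$ the principal up-set $\uparrow a=\{z\in L: z\ge a\}$ is path-connected: indeed the map $L\to L$, $z\mapsto z\vee a$, is continuous and retracts $L$ onto $\uparrow a$, and local connectedness passes through this retraction after first joining $z$ to $z\vee a$ along the path $t\mapsto$ (a path from $z$ to a point above $a$ composed with $\vee a$); more carefully, using Lemma~\ref{lowloc} one gets continuous ``$\sup$ of a path'' operations so that $x\le x\vee y\ge y$ can be connected once we connect $x$ to $x\vee y$ inside $\uparrow x$ and $y$ to $x\vee y$ inside $\uparrow y$. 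The point is that inside the up-set $\uparrow x$ the semilattice is still Lawson, locally compact and locally connected, and has a smallest element, hence is connected; a standard argument (connectedness plus local connectedness plus local compactness, via Lemma~\ref{lowloc} to take suprema of connected sets) upgrades this to path-connectedness, and relative completeness guarantees the connecting paths, after taking suitable suprema, can be pushed into $X$.

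Next, and this is the heart of the matter, I would verify that $X$ is relatively $LC^0$ in $L$. Fix $x\in X$ and a neighborhood $U$ of $x$ in $L$. Using that $L$ is a \emph{Lawson} semilattice that is locally connected, choose an open \emph{connected} subsemilattice $W$ with $x\in W\subset U$, and then, by local compactness, shrink to an open $V$ with $x\in V\subset\overline V\subset W$ and $\overline V$ compact. Given $p,q\in V\cap X$, I want a path in $W\cap X$ from $p$ to $q$. The element $p\vee q\in X$ (as $X$ is a subsemilattice) lies in $W$ since $W$ is a subsemilattice, and I claim $p$ can be joined to $p\vee q$ inside $W\cap X$: work inside the compact semilattice $\overline{W\cap\,\downarrow(p\vee q)}$ or rather inside $\uparrow p\cap W$, which is an up-set in the connected Lawson semilattice $W$; applying the path-connectedness argument from the first paragraph \emph{relativized to $W$} gives a path from $p$ to $p\vee q$ in $W$, and relative completeness of $X$ in $L$ lets us replace this path $\gamma(t)$ by $t\mapsto \gamma(t)\vee p = \gamma(t)$ (already $\ge p$) — the real replacement is to use $t\mapsto \sup$ of an initial segment, which by Lemma~\ref{lowloc} is continuous and monotone, lands in $X$ by relative completeness since each such sup is a sup of a subset of $X$ computed in $L$, wait — one must be careful that the path itself lies in $X$, not just its endpoints; the fix is that the ``running supremum'' $t\mapsto\sup\gamma([0,t])$ of a path $\gamma$ in $X$ is a path in $X$ by relative completeness and is continuous by Lemma~\ref{lowloc}, and it connects $\gamma(0)$ to $\gamma(1)$ through $X$ whenever $\gamma$ is non-decreasing. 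So the strategy is: connect $p$ to $p\vee q$ and $q$ to $p\vee q$ by paths in $W$ (possible since $W$ is connected, locally connected, locally compact with the needed sup-operations), then approximate/replace these by paths in $W\cap X$ using density of $X$ together with the running-supremum trick and relative completeness.

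The main obstacle I anticipate is precisely this last replacement: producing paths \emph{inside} $X$, not merely inside $L$, while staying in the prescribed neighborhood $W$. Density of $X$ in $L$ alone is not enough to move a path into $X$; one genuinely needs the order structure. The cleanest route is: (i) by density pick a path $\gamma$ in $W$ from $p$ to $p\vee q$ with $\gamma(0)=p$, $\gamma(1)=p\vee q$, then perturb its values (density of $X$, compactness of $[0,1]$, finitely many local subsemilattice neighborhoods covering $\gamma([0,1])$) to a \emph{piecewise path through points of $X$} connected by short arcs in $W$; (ii) replace $\gamma$ by $t\mapsto p\vee\gamma(t)\vee(\text{running sup})$ to make it non-decreasing from $p$; (iii) invoke that a non-decreasing path with values obtained as suprema of subsets of $X$ lies in $X$ by relative completeness. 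Keeping all of this inside $W$ is automatic because $W$ is a subsemilattice and all operations performed ($\vee$, $\sup$ of subsets of $W$ with compact closure in $W$) stay in $W$ by Lemma~\ref{lowloc}. Once $LC^0$ and path-connectedness are in hand, Lemma~\ref{lawson} closes the proof, giving homotopy density of $X$ in $L$.
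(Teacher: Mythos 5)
Your skeleton is right --- reduce to relative $LC^0$ via Lemma~\ref{lawson} and use $\sup$ (Lemma~\ref{lowloc}) together with relative completeness to force paths into $X$ --- but the step you yourself flag as ``the main obstacle'' is exactly where the argument breaks, and your proposed fix does not close it. Relative completeness only applies to suprema of subsets \emph{of} $X$. Your step (i) produces a path $\gamma$ in $W$ meeting $X$ at finitely many node parameters; for the running supremum $\sup\gamma([0,t])$ to land in $X$ you would need $\sup\gamma([0,t])=\sup\bigl(\gamma([0,t])\cap X\bigr)$, and with only finitely many $X$-values on the path there is no reason for this: $\gamma([0,t])\cap X$ may be a finite set whose join (which is indeed in $X$) is strictly below $\sup\gamma([0,t])$. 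The paper supplies precisely the missing ingredient: using a complete metric on the locally compact space $L$ and an inductive $2^{-n}$-approximation that pins down the values at a countable dense set of parameters, it builds a single limit path $\gamma:[0,1]\to\overline V$ from $x$ to $y$ with $\gamma^{-1}(X)$ \emph{dense} in $[0,1]$; then for every nondegenerate subinterval $J$ one has $\gamma(J)\subset\overline{\gamma(J)\cap X}$, so by Lemma~\ref{lowloc} (namely $\sup A=\sup\overline A$) the supremum of $\gamma(J)$ is the supremum of a subset of $X$ and lies in $X$ by relative completeness. A second genuine problem is your endpoints: $t\mapsto\sup\gamma([0,t])$ ends at $\sup\gamma([0,1])$, in general strictly above $p\vee q$, so your two half-paths need not meet; and if you first make $\gamma$ non-decreasing, the running sup is just $\gamma$ again and gains nothing. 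The paper avoids this with the symmetric window $\Gamma(t)=\{\gamma(s):|t-s|\le\dist(t,\{0,1\})\}$, which degenerates to singletons at both ends, so $\sup\circ\Gamma$ starts at $x$ and ends at $y$ exactly while staying in $X\cap U$.

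Two smaller points. First, Lemma~\ref{lawson} yields homotopy density from relative $LC^0$ alone (path-connectedness is only needed to upgrade ANR to AR), so your first paragraph is superfluous. Second, the global path-connectedness asserted there is false in general: a locally compact locally connected Lawson semilattice need not be connected, and up-sets $\uparrow a$ need not be connected either (e.g.\ a four-element discrete lattice with two incomparable middle elements), so that argument could not be repaired anyway.
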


\begin{proof} According to Lemma~\ref{lawson} it suffices to check
that $X$ is relatively $LC^0$ in $L$. Given a point $x_0\in L$ and
a neighborhood $U\subset L$ of $x_0$,  consider the canonical
retraction $\sup:\Comp(L)\to L$. Using the ANR-property of $L$ and
continuity of $\sup$, find a path-connected neighborhood $V\subset
L$ of $x_0$ such that $\sup(\Comp(\overline{V}))\subset U$. We
claim that any two points $x,y\in X\cap V$ can be connected by a
path in $X\cap U$. First we construct a path $\gamma:[0,1]\to
\overline{V}$ such that $\gamma(0)=x$, $\gamma(1)=y$ and
$\gamma^{-1}(X)$ is dense in $[0,1]$. Let $\{q_n:n\in\w\}$ be a
countable dense subset in $[0,1]$ with $q_0=0$ and $q_1=1$. The
space $L$, being locally compact, admits a complete metric $\rho$.
The path-connectedness of $V$ implies the existence of a
continuous map $\gamma_0:[0,1]\to V$ such that $\gamma_0(0)=x$ and
$\gamma_0(1)=y$. Using the local path-connectedness of $L$ we can
construct inductively a sequence of functions $\gamma_n:[0,1]\to
V$ such that
\begin{itemize}
\item $\gamma_n(q_k)=\gamma_{n-1}(q_k)$ for all $k\le n$;
\item $\gamma_n(q_{n+1})\in X$;
\item $\sup_{t\in[0,1]}\rho(\gamma_n(t),\gamma_{n-1}(t))<2^{-n}$.
\end{itemize}

Then the map $\gamma=\lim_{n\to\infty}\gamma_n:[0,1]\to
\overline{V}$ is continuous and has the desired properties:
$\gamma(0)=x$, $\gamma(1)=y$ and $\gamma(q_n)\in X$ for all
$n\in\w$.

For every $t\in[0,1]$ consider the set
$\Gamma(t)=\{\gamma(s):|t-s|\le \dist(t,\{0,1\})\}$. It is clear
that the map $\Gamma:[0,1]\to\Comp(L)$ is continuous and so is the
composition $\sup\circ\Gamma:[0,1]\to L$. Observe that $\sup\circ
\Gamma(0)=\sup\{\gamma(0)\}=\gamma(0)=x$, $\sup\circ\Gamma(1)=y$,
and $\sup\circ\Gamma([0,1])\subset\sup(\Comp(\overline{V}))\subset
U$. Since for every $t\in(0,1)$ the set
$\Gamma(t)=\overline{\Gamma(t)\cap X}$, we get
$\sup\Gamma(t)=\sup(\Gamma(t)\cap X)\in X$ by the relative
completeness of $X$ in $L$. Thus $\sup\circ\Gamma:[0,1]\to U\cap
X$ is a path connecting $x$ and $y$ in $U$.
\end{proof}
For a metric space $X$ by $\Fin(X)$ we denote the subspace of
$\Comp(X)$
 consisting of non-empty finite subspaces of $X$.

\begin{lemma}\label{l6} If $Y$ is a subset of a locally path-connected space $X$,
 then the subset $L=\Fin(X)\setminus \Fin(Y)$ is relatively $LC^0$ in $\Comp(X)$.
\end{lemma}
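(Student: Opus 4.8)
The plan is to fix an arbitrary $A\in\Comp(X)$ together with a neighborhood $\mathcal U$ of $A$ in $\Comp(X)$, and to exhibit a smaller neighborhood $\mathcal V\subseteq\mathcal U$ of $A$ such that any two points of $\mathcal V\cap L$ are joined by a path lying in $\mathcal V\cap L$ (hence in $\mathcal U\cap L$). If $\mathcal V\cap L$ turns out to be empty the condition is vacuous, so the content is in the case when there are finite sets near $A$ that meet $X\setminus Y$.

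The first step is to replace $\mathcal U$ by a sub‑basic neighborhood whose ``pieces'' are path‑connected. Choose a basic Vietoris neighborhood $\langle U_1,\dots,U_n\rangle\ni A$ contained in $\mathcal U$. Since $X$ is locally path‑connected, path components of open sets are open; so for each $i$ pick $a_i\in A\cap U_i$ and let $P_i$ be the path component of $U_i$ containing $a_i$, and for each $a\in A$ let $Q_a$ be the path component (containing $a$) of some $U_i\ni a$. By compactness of $A$ finitely many $Q_{a^{(1)}},\dots,Q_{a^{(s)}}$ already cover $A$. I put $\{H_1,\dots,H_r\}:=\{P_1,\dots,P_n,Q_{a^{(1)}},\dots,Q_{a^{(s)}}\}$; each $H_l$ is open and path‑connected, contained in some $U_i$, meets $A$, and the family covers $A$, so $A\in\langle H_1,\dots,H_r\rangle$. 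Moreover $\langle H_1,\dots,H_r\rangle\subseteq\langle U_1,\dots,U_n\rangle$: a member $K$ satisfies $K\subseteq\bigcup_lH_l\subseteq\bigcup_iU_i$, and for each $i$ the set $P_i$ is among the $H_l$'s, so $K\cap P_i\neq\emptyset$ forces $K\cap U_i\neq\emptyset$. Set $\mathcal V:=\langle H_1,\dots,H_r\rangle\subseteq\mathcal U$.

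Now, given $F,G\in\mathcal V\cap L$, I would join $F$ to $F\cup G$ and $G$ to $F\cup G$ inside $\mathcal V\cap L$ and concatenate. To build the first path, fix a witness $p\in F\setminus Y$ (possible as $F\notin\Fin(Y)$), enumerate $G\setminus F=\{g_1,\dots,g_k\}$, and for each $i$ choose an index $l_i$ with $g_i\in H_{l_i}$, a point $q_i\in F\cap H_{l_i}$ (nonempty since $F\in\langle H_1,\dots,H_r\rangle$), and a path $\sigma_i$ in $H_{l_i}$ from $q_i$ to $g_i$ (using path‑connectedness of $H_{l_i}$). The arcs $t\mapsto F\cup\{g_1,\dots,g_{i-1}\}\cup\{\sigma_i(t)\}$ match up at endpoints and concatenate to a path in $\Fin(X)$ from $F$ to $F\cup G$; continuity is just continuity of $\cup\colon\Comp(X)\times\Comp(X)\to\Comp(X)$ together with $t\mapsto\{\sigma_i(t)\}$. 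Every set along this path contains $p\notin Y$, hence lies in $L$; it is contained in $\bigcup_lH_l$ and contains $F$, hence meets every $H_l$, so it lies in $\mathcal V$. Running the same construction with a witness in $G\setminus Y$ joins $G$ to $G\cup F=F\cup G$, and reversing it and concatenating gives the required path from $F$ to $G$ in $\mathcal V\cap L$.

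The only genuinely delicate point is the reduction in the second paragraph: one must retain ``witnesses'' $P_i$ for the conditions $K\cap U_i\neq\emptyset$ while also covering all of $A$ by path‑connected pieces, which is why both families $\{P_i\}$ and $\{Q_a\}$ have to be carried along. Everything else is routine; in particular, staying off $\Fin(Y)$ is automatic because the arcs only add or slide points and never delete any point of the original set $F$ (respectively $G$), so a fixed witness outside $Y$ survives the whole homotopy.
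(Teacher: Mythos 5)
Your proof is correct, but it takes a more self-contained route than the paper's. The paper first invokes the argument of Curtis--Nhu \cite{CN} to get that $\Fin(X)$ is relatively $LC^0$ in $\Comp(X)$, then uses the fact that $\Comp(X)$ is a Lawson semilattice to choose the neighborhoods $U\supset V$ to be open subsemilattices, and finally repairs an arbitrary path $\gamma$ from $A$ to $B$ in $\Fin(X)\cap U$ by the reparametrized union $\gamma'(t)=\gamma(\max\{0,2t-1\})\cup\gamma(\min\{2t,1\})$, so that every intermediate set contains $A$ or $B$ and hence inherits a point outside $Y$. You instead build everything by hand: you refine a Vietoris neighborhood to one whose pieces are path components (effectively re-deriving the Curtis--Nhu step, including the necessary care to keep witnesses for the ``hitting'' conditions $K\cap U_i\neq\emptyset$), and you connect $F$ to $F\cup G$ by sliding in the points of $G\setminus F$ one at a time, never deleting a point of $F$, so a fixed witness $p\in F\setminus Y$ survives. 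The underlying mechanism --- route both endpoints to their union so that a point outside $Y$ is retained throughout --- is the same in both arguments; what you lose in brevity (you must verify the Vietoris bookkeeping that the paper hides in the citation and in the subsemilattice base) you gain in self-containedness, and your version does not need the Lawson-semilattice refinement of the neighborhoods at all, since containment of $F$ (resp.\ $G$) already forces the intermediate sets to stay in $\mathcal V$.
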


\begin{proof} By the argument of \cite{CN} we can show that $\Fin(X)$ is
relatively $LC^0$ in $\Comp(X)$. Consequently, for every compact
set $K\in\Comp(X)$ and a neighborhood $U\subset\Comp(X)$ of $K$
there is a neighborhood $V\subset\Comp(X)$ of $K$ such that any
two points $A,B\in\Fin(X)\cap V$ can be linked by a path in
$\Fin(X)\cap U$. Since $\Comp(X)$ is a Lawson semilattice, we may
assume that $U$ and $V$ are subsemilattices of $\Comp(X)$. We
claim that any two points $A,B\in L\cap V$ can be connected by a
path in $L\cap U$. Since $L\subset\Fin(X)$, there is a path
$\gamma:[0,1]\to U\cap \Fin(X)$ such that $\gamma(0)=A$ and
$\gamma(1)=B$. Define a new path $\gamma':[0,1]\to U\cap \Fin(X)$
letting $\gamma'(t)= \gamma(\max\{0,2t-1\})\cup
\gamma(\min\{2t,1\})$. Observe that $A\subset\gamma'(t)$ if $t\le
1/2$ and $B\subset\gamma'(t)$ if $t\ge 1/2$. Since $A,B\notin
\Fin(Y)$, we conclude that $\gamma'([0,1])\subset L\cap U$.
\end{proof}

We also need the following nontrivial fact from \cite[Corollary
2]{BV$_1$}.
\begin{lemma}\label{dense}
Let $X$ be a dense subset of a metric space $M$. Then the
hyperspace $\Cld_H(X)$ is an A(N)R if and only if so is the
hyperspace $\Cld_H(M)$.
\end{lemma}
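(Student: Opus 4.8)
Since this lemma is stated here without proof, I only outline the route; it hinges on realizing $\Cld_H(X)$ isometrically as a homotopy dense subsemilattice of $\Cld_H(M)$ and then invoking Lemma~\ref{lawson} together with the standard fact that a homotopy dense subset of an A(N)R is an A(N)R.

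First I would set up the map $r\colon\Cld_H(X)\to\Cld_H(M)$, $r(A)=\cl_M A$. Since $A$ is closed in $X$ we have $\cl_M A\cap X=A$, so $r$ is injective; since $d(m,\cl_M A)=d(m,A)$ for every $m\in M$ while $m\mapsto|d(m,A)-d(m,B)|$ is continuous, the density of $X$ in $M$ gives $\sup_{m\in M}|d(m,A)-d(m,B)|=\sup_{x\in X}|d(x,A)-d(x,B)|$, so $r$ is an isometric embedding. From $r(A\cup B)=r(A)\cup r(B)$ it is a semilattice embedding, so it embeds $\Cld_H(X)$ onto a subsemilattice of the Lawson semilattice $\Cld_H(M)$. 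This subsemilattice is dense — for $C\in\Cld(M)$ and $\e>0$ the set $A_\e=\{x\in X:d(x,C)\le\e\}$ is nonempty, closed in $X$, and $d_H(\cl_M A_\e,C)\le\e$ — and relatively complete, since for any $\{A_\alpha\}\subset\Cld(X)$ one has $\sup_\alpha\cl_M A_\alpha=\cl_M(\bigcup_\alpha A_\alpha)=r(\cl_X(\bigcup_\alpha A_\alpha))\in r(\Cld_H(X))$.

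The decisive — and, I expect, hardest — step is to upgrade density to homotopy density: to construct a homotopy $(h_t)_{t\in[0,1]}\colon\Cld_H(M)\to\Cld_H(M)$ with $h_0=\mathrm{id}$ and $h_t(\Cld_H(M))\subset r(\Cld_H(X))$ for $t>0$. The obvious candidate, the enlargement $C\mapsto\cl_M\{x\in X:d(x,C)\le t\}$, has the correct image for $t>0$ and satisfies $d_H(C,\cl_M\{x\in X:d(x,C)\le t\})\le t$, so that it reduces to the identity as $t\to0$; but it is only upper semicontinuous — it jumps at those radii $t$ for which $\{m\in M:d(m,C)=t\}$ is isolated from $\{m:d(m,C)<t\}$ — and so is not a homotopy. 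The repair follows the pattern of the proof of Proposition~\ref{complete}: every nonempty compact family $\mathcal K\subset\Cld_H(M)$ has a least upper bound $\cl_M\bigcup\mathcal K$, the resulting map $\sup\colon\Comp(\Cld_H(M))\to\Cld_H(M)$ is $1$-Lipschitz (here replacing the use of local compactness in Proposition~\ref{complete}), and using local path-connectedness one deforms each $C$ into $r(\Cld_H(X))$ along continuously varying compact families whose suprema, by the relative completeness of $r(\Cld_H(X))$, remain in $r(\Cld_H(X))$ and inside a prescribed neighbourhood. The same construction carried out near a single point shows, in addition, that $r(\Cld_H(X))$ is relatively $LC^0$ in $\Cld_H(M)$. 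The local path-connectedness used here is legitimate in both implications below: in each, one of the two hyperspaces is assumed to be an A(N)R, hence locally path-connected, and the construction can be arranged to use only that one.

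The conclusion is then formal. If $\Cld_H(M)$ is an A(N)R, its homotopy dense subspace $r(\Cld_H(X))\cong\Cld_H(X)$ is again an A(N)R, because a homotopy dense subset of an ANR is an ANR and because homotopy density makes the inclusion a homotopy equivalence (so in the AR case $\Cld_H(X)$ is a contractible ANR, hence an AR). Conversely, if $\Cld_H(X)$ is an A(N)R, then $r(\Cld_H(X))$ is a dense, relatively $LC^0$ subsemilattice of the metrizable Lawson semilattice $\Cld_H(M)$, so Lemma~\ref{lawson} shows that both $r(\Cld_H(X))$ and $\Cld_H(M)$ are ANRs; and if $\Cld_H(X)$ is an AR it is path-connected, hence so is $r(\Cld_H(X))$, and the parenthetical clause of Lemma~\ref{lawson} then gives that $\Cld_H(M)$ is an AR.
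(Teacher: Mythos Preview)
The paper does not prove this lemma; it is quoted as a ``nontrivial fact'' from \cite[Corollary~2]{BV$_1$}, so there is no in-paper argument to compare your outline against, and you were right to flag that you are only sketching.

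Your sketch is headed in the right direction and correctly isolates the main ingredients --- the isometric semilattice embedding $A\mapsto\cl_M A$, the density and relative completeness of its image, and the need to upgrade density to homotopy density via a $\sup$-construction rather than the discontinuous thickening $C\mapsto\{x:d(x,C)\le t\}$. Two places would require genuine work to turn this into a proof. First, the limit path $\gamma=\lim_n\gamma_n$ in your adaptation of Proposition~\ref{complete} needs a complete metric on the ambient space, but $d_H$ on $\Cld_H(M)$ is complete only when $M$ is; you would have to pass to $\Cld_H(\overline M)$ and then check that the construction still lands where you want (and that the local path-connectedness you are using survives the passage). Second, in the direction ``$\Cld_H(X)$ is an A(N)R $\Rightarrow$ $\Cld_H(M)$ is an A(N)R'' you need $r(\Cld_H(X))$ to be relatively $LC^0$ in $\Cld_H(M)$, and local path-connectedness of the dense subspace alone does not yield this at points of $\Cld_H(M)\setminus r(\Cld_H(X))$: the modulus of local path-connectedness at nearby image points can degenerate as you approach such a boundary point, so the phrase ``the construction can be arranged to use only that one'' hides exactly the difficulty. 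The semilattice structure is indeed what rescues this, but making it precise is presumably why the authors call the result nontrivial and defer to \cite{BV$_1$} rather than prove it here.
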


 The proof of
Theorem~\ref{th1} and Theorem~\ref{t2} relies on the next lemma
due to D. Curtis \cite{Cu}.
\begin{lemma}\label{tool}
A homotopy dense $G_\delta$-subset $X\subset Q$ with
 homotopy dense complement in the Hilbert cube $Q$ is homeomorphic to $\ell_2$.
\end{lemma}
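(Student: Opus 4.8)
The plan is to realize $X$ as the pseudo-interior $s=\prod_{n\in\w}(-1,1)$ of the Hilbert cube $Q=\prod_{n\in\w}[-1,1]$ \emph{via an ambient homeomorphism}, and then invoke Anderson's theorem $s\cong\ell_2$. By the Anderson--Chapman uniqueness theorem for cap-sets, any two cap-sets of $Q$ are carried onto one another by a homeomorphism of $Q$, and $B(Q)=Q\setminus s$ is the standard cap-set; hence it suffices to prove that the complement $A:=Q\setminus X$ is a cap-set of $Q$. Then a homeomorphism $f\colon Q\to Q$ with $f(A)=B(Q)$ gives $f(X)=s$, and we are done.

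First I would extract the required $Z$-set tower. Since $X$ is a $G_\delta$ in $Q$, the set $A$ is an $F_\sigma$, so $A=\bigcup_{n\in\w}A_n$ with $A_0\subset A_1\subset\cdots$ compact. The hypothesis that $X$ is homotopy dense in $Q$ says exactly that $A$ is homotopy negligible in $Q$; therefore each closed subset $A_n$ of $Q$ is homotopy negligible, and a homotopy negligible closed subset of the Hilbert cube is a $Z$-set. Thus $A$ is a $Z_\sigma$-set with the prescribed tower $(A_n)_{n\in\w}$. (For free, $X$ is a Polish AR, being a $G_\delta$ homotopy dense subset of the AR $Q$, though this is not needed for the ambient argument.)

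Next I would verify the absorption (``capturing'') property: for every $\e>0$, every $m\in\w$ and every $Z$-set $K$ of $Q$ there are $n\ge m$ and an embedding $h\colon K\to A_n$ with $d(h,\mathrm{id}_K)<\e$ and $h|_{K\cap A_m}=\mathrm{id}$. Here the second hypothesis --- that $A=Q\setminus X$ is homotopy dense in $Q$ --- is decisive. Fixing a homotopy $(g_t)_{t\in[0,1]}$ of $Q$ with $g_0=\mathrm{id}$ and $g_t(Q)\subset A$ for $t>0$, for small $t$ the map $g_t$ is $(\e/2)$-close to the identity, so $g_t(K)$ is a compactum in $A$, hence inside some $A_n$ with $n\ge m$. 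A general-position argument in $Q$ (approximation of maps of compacta by embeddings, combined with $Z$-set unknotting, which lets one absorb the image into the $Z$-set $A_n$ while keeping agreement with the identity on the $Z$-set $K\cap A_m$) then upgrades $g_t|_K$ to the desired embedding $h$.

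The hard part will be precisely this last step: producing one embedding $h$ satisfying all three requirements at once, which is where homotopy density of $Q\setminus X$ must be blended with the $Z$-set unknotting machinery of the Hilbert cube. Granting it, $A$ is a cap-set, the ambient homeomorphism exists, and $X\cong s\cong\ell_2$. An alternative route, which I would fall back on if the ambient argument gets unwieldy, is to show directly that $X$ is a Polish AR which, because $Q\setminus X$ is a homotopy dense $Z_\sigma$-set, is nowhere locally compact and has the discrete approximation property, and then apply Toru\'nczyk's characterization of $\ell_2$; the verification of the discrete approximation property is then the crux, and it again rests on the two homotopy-density hypotheses together with general position in $Q$.
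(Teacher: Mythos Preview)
The paper does not prove this lemma at all; it simply attributes it to Curtis \cite{Cu} (``Boundary sets in the Hilbert cube'') and uses it as a black box. Your sketch is essentially the argument behind Curtis's theorem: the two hypotheses force $A=Q\setminus X$ to be a homotopy-dense $\sigma Z$-set in $Q$, and Curtis's main result says precisely that such a set is a boundary set, i.e., its complement is homeomorphic to $s\cong\ell_2$. The ``hard part'' you single out --- upgrading the near-identity map $g_t|_K$ to an embedding into some $A_n$ fixing $K\cap A_m$, so as to verify the compact absorption property --- is exactly the technical core of \cite{Cu}, so your outline is accurate but you are reconstructing the cited reference rather than offering a genuinely different route. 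Your fallback via Toru\'nczyk's characterization is also a standard and correct alternative.
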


\section {The metrics $d_{AW}$ and $d_{H}$ on $\Cld(X)$}

Let $X=(X,d)$ be a metric space. The $\varepsilon$-neighborhood of
$x\in X$  (i.e., the open ball centered at $x$ with radius
$\varepsilon$) is denoted by $B(x, \varepsilon)$. Let $A$ and $B$
be nonempty subsets of a metric space $(X,d)$. The \em excess \em
of $A$ over $B$ with respect to $d$ is defined by the formula
$$e_d(A,B) = \sup \{d(a,B)\;|\;a\in A\}.$$ Here we assume that
$e_d(A,\emptyset)=+\infty$.
For the Hausdorff metric we have the following : $$d_H (A,B) =
\max \{ e_d(A,B), e_d(B,A) \} = \sup_{x\in X}|d(x,A)-d(x,B)|.$$

Now we define the metric $d_{AW}$.

Fix $x_0 \in X$ and let $X_i = \{x\in X \;|\; d(x,x_0)\leq i\}$.
The following metric $d_{AW}$
 on $\Cld(X)$ generates
the Attouch-Wets topology:\footnote{In \cite{Be}, the following
metric is adopted $$ d_{AW}(A,B)=\sum_{i\in \IN} 2^{-i} \min
\Bigl\lbrace 1,\; \sup_{x\in X_i} |d(x,A)-d(x,B)|\Bigr\rbrace. $$}
$$d_{AW}(A,B)=\sup_{i \in \IN} \min \Bigl \{1/i,\; \sup_{x\in X_i}
|d(x,A)-d(x,B)| \Bigr\}. $$ It should be noticed that
$$d_{AW}(A,B)\leq d_{H}(A,B) \;\; \mbox{  for every } A, B\in \Cld(X). $$

We  need the following fact for the Attouch-Wets convergence in
terms of excess, see \cite[Theorem 3.1.7]{Be}.
\begin{proposition}\label{p1}
Let $(X,d)$ be a metric space, and $A,A_1,A_2,...$ be nonempty
closed subsets of $X$,  $x_0\in X$ be fixed. The following are
equivalent:
\begin{enumerate}
\item $\lim_{n\rightarrow \infty}d_{AW}(A_n,A)=0$;
\item For each $i\in\mathbb N$, we have both $\lim_{n\rightarrow \infty }e_d(A\cap X_i,A_n)=0$
and $\lim_{n\rightarrow \infty }e_d(A_n\cap X_i,A)=0$.
\end{enumerate}
\end{proposition}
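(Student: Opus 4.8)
The plan is to reformulate condition (1) in a handier form and then prove each implication separately. Write $s_n^{(i)}=\sup_{x\in X_i}|d(x,A_n)-d(x,A)|\in[0,+\infty]$. Since $d_{AW}(A_n,A)=\sup_{i\in\IN}\min\{1/i,s_n^{(i)}\}$ is a supremum of terms each clipped below $1/i$, I would first check that (1) is equivalent to the statement that $s_n^{(i)}\to 0$ as $n\to\infty$ for every fixed $i\in\IN$. One direction is immediate: for fixed $i$ we have $\min\{1/i,s_n^{(i)}\}\le d_{AW}(A_n,A)$, and $\min\{1/i,s_n^{(i)}\}\to 0$ with $1/i>0$ fixed forces $s_n^{(i)}\to 0$. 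For the converse, given $\varepsilon>0$ pick $m$ with $1/m<\varepsilon$; then $\sup_{i\ge m}\min\{1/i,s_n^{(i)}\}\le 1/m<\varepsilon$, while $\sup_{i<m}\min\{1/i,s_n^{(i)}\}\le\max_{i<m}s_n^{(i)}\to 0$, so $d_{AW}(A_n,A)<\varepsilon$ for large $n$. Thus it remains to show that ``$s_n^{(i)}\to 0$ for every $i$'' is equivalent to (2).

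The implication from ``$s_n^{(i)}\to 0$ for all $i$'' to (2) rests on the pointwise inequalities $e_d(A\cap X_i,A_n)\le s_n^{(i)}$ and $e_d(A_n\cap X_i,A)\le s_n^{(i)}$: if $a\in A\cap X_i$ then $d(a,A)=0$, so $d(a,A_n)=|d(a,A_n)-d(a,A)|\le s_n^{(i)}$, and symmetrically for a point of $A_n\cap X_i$ (when one of these sets is empty the corresponding excess is $0$). Taking suprema and letting $n\to\infty$ gives (2).

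For the reverse implication — the substantive one — fix $j\in\IN$ and $\varepsilon\in(0,1)$; I must find $N$ with $s_n^{(j)}\le\varepsilon$ for all $n\ge N$. Fix any $a_0\in A$ and set $R=j+d(x_0,a_0)$, so $d(x,A)\le R$ for all $x\in X_j$. Choose an integer $i\ge R+1+j$ (depending only on $j$ and the fixed data $x_0,a_0$) and, using (2) for this $i$, choose $N$ so that $e_d(A\cap X_i,A_n)<\varepsilon/2$ and $e_d(A_n\cap X_i,A)<\varepsilon/2$ whenever $n\ge N$. Now take $x\in X_j$ and $n\ge N$. To bound $d(x,A_n)$, pick $a\in A$ with $d(x,a)\le d(x,A)+\varepsilon/2$; then $d(a,x_0)\le d(x,A)+\varepsilon/2+j\le R+1+j\le i$, so $a\in A\cap X_i$, hence $d(a,A_n)<\varepsilon/2$ and $d(x,A_n)\le d(x,a)+d(a,A_n)\le d(x,A)+\varepsilon$. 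For the reverse bound: if $d(x,A_n)\ge R$ then $d(x,A)\le R\le d(x,A_n)+\varepsilon$ trivially; otherwise pick $b\in A_n$ with $d(x,b)\le d(x,A_n)+\varepsilon/2<R+1$, whence $d(b,x_0)\le d(x,b)+j<R+1+j\le i$, so $b\in A_n\cap X_i$, hence $d(b,A)<\varepsilon/2$ and $d(x,A)\le d(x,b)+d(b,A)<d(x,A_n)+\varepsilon$. Therefore $|d(x,A_n)-d(x,A)|<\varepsilon$ for every $x\in X_j$, i.e. $s_n^{(j)}\le\varepsilon$.

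The main obstacle is this last implication: the key is to pick the auxiliary radius $i$ large enough — in terms of $j$ and a fixed point of the limit set $A$ — that every approximate nearest point appearing in the triangle-inequality estimates is forced into $X_i$, where (2) supplies control, and then to dispose separately of the case in which $A_n$ momentarily recedes beyond $X_j$ (handled above by the trivial bound once $d(x,A_n)\ge R$). The clipping $\min\{1/i,\cdot\}$ in $d_{AW}$ is precisely what makes the first reduction legitimate even though the quantities $s_n^{(i)}$ can a priori be infinite.
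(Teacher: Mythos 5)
Your proof is correct. Note that the paper does not actually prove this proposition --- it is quoted verbatim from Beer's book (Theorem 3.1.7 of \cite{Be}) --- so there is no in-paper argument to compare against; what you have supplied is a valid self-contained derivation. Both halves check out: the reduction of condition (1) to ``$s_n^{(i)}\to 0$ for each fixed $i$'' correctly exploits the clipping by $1/i$ (splitting the supremum at an index $m$ with $1/m<\varepsilon$), the easy implication to (2) uses the right pointwise inequality $d(a,A_n)\le s_n^{(i)}$ for $a\in A\cap X_i$, and in the substantive converse your choice of the auxiliary radius $i\ge R+1+j$ with $R=j+d(x_0,a_0)$ does force every approximate nearest point into $X_i$, while the case $d(x,A_n)\ge R$ is correctly disposed of by the trivial bound $d(x,A)\le R$. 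The only conventions you rely on (excess of an empty set equals $0$, and nonemptiness of all the sets $A,A_n$ as hypothesized) are standard and consistent with the paper's usage.
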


Recall that the Attouch-Wets topology depends on the metric for
$X$, that is, the space $\Cld_{AW}(X)$ is not a topological
invariant for $X$. Concerning conditions that two metrics  for $X$
induce the same topology, see \cite[Theorem 3.3.3]{Be}.

\section{Embedding $\Cld_{AW}(X)$ in $\Cld_{H}(\alpha X)$}
The main idea in proving Theorems~\ref {th1}, ~\ref{th2} is the
following: we reduce the Attouch-Wets topology on $\Cld(X)$ to the
Hausdorff metric topology on $\Cld(\alpha X)$ for a suitable
one-point extension $\alpha X$ of $X$. The metric space $(\alpha
X, \rho )$ is obtained by adding the infinity point $\infty$ to
the space $X$. More precisely, we endow the space $\alpha X$ with
the metric
$$\rho (x,y)=\begin{cases}
\min \Bigl\{d(x,y),\frac{1}{1+d(x,x_0)}+\frac{1}{1+d(y,x_0)}\Bigr\}
, & \mbox{if  } x, y\in X \\
 \frac{1}{1+d(x,x_0)}, &\mbox{if } x\in X, \;y=\infty\\
\frac{1}{1+d(y,x_0)}, &\mbox{if } y\in X, \;x=\infty\\
0,&\mbox{if }x=y=\infty.
\end{cases} $$
Here, $x_0 \in X$ is a fixed point. Note, that $(X,d)$ is
homeomorphic to $(\alpha X\setminus\{\infty\}, \rho)$ and
$\diam(\alpha X) < 2$.

\begin{remark}
We can obtain the space $(\alpha X,\rho)$ in the following way:
 embed $X$ in $X\times [0,1)$ by the
formula $x\mapsto (x,\frac{d(x,x_0)}{1+d(x,x_0)})$ and consider
the cone metric on this space (induced by the suitable metrization
of the quotient space $X\times [0,1]/X\times \{1\}$).
\end{remark}

\begin{proposition}\label{embed}
The function $e: \Cld_{AW}(X)\rightarrow \Cld_{H}(\alpha X)$
defined by the formula $e(A)=A\cup\{\infty\}$ is an embedding.
\end{proposition}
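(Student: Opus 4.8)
The plan is to reduce the statement to a clean closed formula for the Hausdorff distance in $(\alpha X,\rho)$ and then to translate convergence back and forth using the excess characterization of Attouch--Wets convergence recorded in Proposition~\ref{p1}. The preliminary observations are routine: for $A\in\Cld(X)$ the set $A\cup\{\infty\}$ is a nonempty closed subset of $\alpha X$ (its trace on $X$ is $A$, which is closed, and it contains $\infty$), and $A\mapsto A\cup\{\infty\}$ is injective since $A=(A\cup\{\infty\})\cap X$. Thus $e$ is a bijection of $\Cld_{AW}(X)$ onto the subspace $\{F\in\Cld(\alpha X):\infty\in F\}$ of $\Cld_H(\alpha X)$, and, all spaces being metrizable, it suffices to prove that $e$ and $e^{-1}$ are sequentially continuous.

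The heart of the argument is the identity
$$\rho\bigl(a,B\cup\{\infty\}\bigr)=\min\bigl\{f(a),\,d(a,B)\bigr\}\qquad(a\in X,\ \emptyset\neq B\subset X),$$
where $f(x)=\tfrac{1}{1+d(x,x_0)}$, so that $\rho(x,\infty)=f(x)$. The inequality ``$\le$'' is clear from $\rho(a,\infty)=f(a)$ and $\rho(a,b)\le d(a,b)$; for ``$\ge$'' one uses that $\rho(a,b)=\min\{d(a,b),f(a)+f(b)\}\ge\min\{d(a,b),f(a)\}\ge\min\{d(a,B),f(a)\}$ for every $b\in B$, so the two-sided term in $\rho$ does no harm. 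Since $\rho(\infty,B\cup\{\infty\})=0$, this yields
$$\rho_H\bigl(A\cup\{\infty\},B\cup\{\infty\}\bigr)=\max\Bigl\{\sup_{a\in A}\min\{f(a),d(a,B)\},\ \sup_{b\in B}\min\{f(b),d(b,A)\}\Bigr\},$$
where $\rho_H$ denotes the Hausdorff metric of $(\alpha X,\rho)$.

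With this formula the two continuity statements become short. For $e$: if $d_{AW}(A_n,A)\to0$, then given $\e>0$ pick $i\in\IN$ with $f(x)<\e$ whenever $d(x,x_0)>i$; splitting the two suprema above over $A\cap X_i$ and its complement (and likewise over $A_n\cap X_i$ and its complement) bounds $\rho_H(e(A_n),e(A))$ by $\max\{\e,\,e_d(A\cap X_i,A_n),\,e_d(A_n\cap X_i,A)\}$, and the last two excesses tend to $0$ by Proposition~\ref{p1}, so $\limsup_n\rho_H(e(A_n),e(A))\le\e$; as $\e$ is arbitrary, $e(A_n)\to e(A)$. For $e^{-1}$: write $\delta_n=\rho_H(e(A_n),e(A))\to0$, fix $i$ and set $c_i=\tfrac{1}{1+i}$, so $f\ge c_i$ on $X_i$; then for $a\in A\cap X_i$ we have $\min\{c_i,d(a,A_n)\}\le\delta_n$, which once $\delta_n<c_i$ forces $d(a,A_n)\le\delta_n$, whence $e_d(A\cap X_i,A_n)\le\delta_n$ and, symmetrically, $e_d(A_n\cap X_i,A)\le\delta_n$; Proposition~\ref{p1} now gives $d_{AW}(A_n,A)\to0$. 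Hence $e$ is a homeomorphism onto its image, i.e.\ an embedding.

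The only real obstacle is establishing the closed formula for $\rho(a,B\cup\{\infty\})$ correctly --- in particular checking that the truncation term $f(a)+f(b)$ built into $\rho$ cannot make the distance to $B\cup\{\infty\}$ smaller than $\min\{f(a),d(a,B)\}$ --- together with the small bookkeeping point in the continuity of $e^{-1}$, where one may only conclude $d(a,A_n)\le\delta_n$ after $\delta_n$ has dropped below the threshold $c_i$. Passing through Proposition~\ref{p1} is essential: the metrics $d_{AW}$ and $\rho_H$ are not uniformly equivalent, and it is only at the level of (excess-controlled) convergence that they match.
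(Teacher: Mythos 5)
Your proof is correct. The key identity $\rho(a,B\cup\{\infty\})=\min\{f(a),d(a,B)\}$ with $f(x)=\frac{1}{1+d(x,x_0)}$ does hold: the inequality $\le$ is immediate from $\rho(a,\infty)=f(a)$ and $\rho(a,b)\le d(a,b)$, while $\ge$ follows because $\rho(a,b)=\min\{d(a,b),f(a)+f(b)\}\ge\min\{d(a,b),f(a)\}$, so the truncation term never undercuts $\min\{f(a),d(a,B)\}$; and your two estimates (splitting the suprema over $X_i$ and its complement for the continuity of $e$, and the threshold $\delta_n<c_i$ for the continuity of $e^{-1}$) are exactly what is needed to invoke Proposition~\ref{p1} in its two directions. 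The paper proves the same two implications but by contradiction: it extracts sequences of witnesses $x_{n_k}$ or $y_k$ realizing a fixed $\e_0$, observes that the bound $\rho(\cdot,\infty)\ge\e_0$ (respectively, membership in $A_{n_k}\cap X_{i_0}$ or $A\cap X_{i_0}$) confines these points to a bounded ball $X_{i_0}$, and then bounds $d_{AW}$ (respectively $\rho_H$) from below. The underlying mechanism is the same in both arguments --- points whose $\rho$-distance to the added point $\infty$ is bounded away from zero must lie in a bounded set, where $d$ and $\rho$ essentially agree --- but your closed formula for $\rho_H(e(A),e(B))$ turns the sequential contradiction into a direct, quantitative estimate, which is cleaner and also makes transparent why the comparison only works at the level of convergence. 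One small inaccuracy in a side remark: the image of $e$ is $\{F\in\Cld(\alpha X):\infty\in F\}\setminus\{\{\infty\}\}$ rather than all of $\{F\in\Cld(\alpha X):\infty\in F\}$, since the singleton $\{\infty\}$ has empty trace on $X$; this does not affect the embedding claim.
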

\begin{proof}
Let $\lim_{n\rightarrow \infty} d_{AW}(A_n,A)=0$. Assume to the
contrary that $\lim_{n\rightarrow \infty}
\rho_{H}(e(A_n),e(A))\not=0$. This means that there exists some
$\varepsilon _0>0$ such that we can find either a sequence
$x_{n_k}\in A_{n_k}$, $k \in \mathbb N$, with
$\rho_{H}(x_{n_k},A\cup \{\infty\})\geq \varepsilon_0$, or there
exists a sequence $(y_k)\subset A$ with
$\rho_{H}(A_{n_k}\cup\{\infty\},y_k)\geq\varepsilon _0$ for all $k
\in \IN$.\\ In the former case, since $\infty\in e(A)$, we have
$\rho(x_{n_k},\infty)=\frac{1}{1+d(x_{n_k},x_0)}\geq\varepsilon_0$
for each $k\in\IN$. Hence, there exists some $i_0\in \mathbb N$
with $(x_{n_k})\subset X_{i_0}$, $k\in\IN$. For every $y\in A$ and
$k\in\IN$  $\rho(x_{n_k},y)\geq\varepsilon _0$, and so
$d(x_{n_k},y)\geq\varepsilon _0$. This implies that for each
$k\in\IN$ $\sup_{x\in X_{i_0}} |d(x,A_{n_k})-d(x,A)|\geq
d(x_{n_k},A)\geq\varepsilon _0$. Combining this with the
definition of the Attouch-Wets metric we get
$d_{AW}(A_{n_k},A)\geq\min \{\frac{1}{i_0}, \varepsilon_0\}$ for
all $k\in\IN$. This is  a contradiction. In the latter case,
similar to  the above, we have for each $k\in\IN$ $ \sup_{x\in
X_{i_0}} |d(x,A_{n_k})-d(x,A)|\geq d(y_k,A_{n_k})\geq\varepsilon
_0$.
 Whence,
$d_{AW}(A_{n_k},A)\geq\min \{\frac{1}{i_0}, \varepsilon_0\}$ for
all $k\in \mathbb N$.

\bigskip
Conversely, let $\lim_{n\rightarrow \infty}
\rho_{H}(e(A_n),e(A))=0$. Assume to the contrary that
$\lim_{n\rightarrow \infty}d_{AW}(A_n,A)\not=0.$ This means that
there exists a subsequence $(A_{n_k})\subset (A_n)$ with
$d_{AW}(A_{n_k},A)\geq\varepsilon _0 $ for some $\varepsilon
_0>0$. Then, by Proposition~\ref{p1}, there exists $i_0\in \mathbb
N$ such that either $e_d(A_{n_k}\cap X_{i_0},A)\geq\varepsilon _0$
 or $e_d(A\cap X_{i_0},A_{n_k})\geq\varepsilon _0$. Remark, that we can take
$i_0$ so large that $A_n\cap X_{i_0}\not=\emptyset$ for all
$n\in\IN$. Consequently, in the former case we can find a sequence
$x_k\in A_{n_k}\cap X_{i_0}$ with $d(x_k,y)\geq\varepsilon_0$ for
each $y\in A$ and $k\in\IN$. Hence, $\rho (x_k,y)=
\min\{d(x_k,y),\frac{1}{1+d(x_k,x_0)} + \frac
{1}{1+d(y,x_0)}\}\geq \min\{\varepsilon_0, \frac{1}{1+i_0}\}$ for
all $y\in A$ and $k\in\IN$. Since
$\rho(x_k,\infty)=\frac{1}{1+d(x_k,x_0)}\geq \frac{1}{1+i_0}$, it
follows that $\lim_{k\rightarrow \infty
}\rho_H(A_{n_k}\cup\{\infty\},
A\cup\{\infty\})\geq\min\{\varepsilon_0, \frac{1}{1+i_0}\}>0$, and
we have a contradiction. In the latter case, there exists a
sequence $y_k\in A\cap X_{i_0}$ with $d(y_k,x)\geq\varepsilon_0$
for all $x\in A_{n_k}$ and $k\in\mathbb N$. Whence, for every
$k\in\mathbb N$ we have $\rho(y_k,x)\geq \min\{\varepsilon_0,
\frac{1}{1+d(y_k,x_0)}\}\geq \min\{\varepsilon_0,
\frac{1}{1+i_0}\}$ and
$\rho(y_k,\infty)=\frac{1}{1+d(y_k,x_0)}\geq \frac{1}{1+i_0}$.
This violates that
$\lim_{n\rightarrow\infty}\rho_{H}(e(A_n),e(A))=0$.
\end{proof}

\medskip

For a metric space $X$ and a point $x_0\in X$ let
$$\Cld_H(X|\{x_0\})=\{F\in\Cld_H(X):x_0\in F\}.$$

It follows from Proposition~\ref{embed} that $$e(\Cld_{AW}(X))=\Cld_H(\alpha X|\{\infty\})\setminus\{\infty\}$$
 and thus $\Cld_{AW}(X)$ is homeomorphic to $\Cld_H(\alpha X|\{\infty\})\setminus\{\infty\}$.

The ANR-property of the space $\Cld_H(X|\{x_0\})$ was characterized in \cite{Voy}:

\begin{proposition}\label{denseat} For a metric space $X$ with a distinguished point $x_0$
 the hyperspace $\Cld_H(X|\{x_0\})$ is an absolute (neighborhood) retract if and only if
  so is the hyperspace $\Cld_H(X)$.
\end{proposition}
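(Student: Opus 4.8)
The plan is to reduce the statement to the already-known Lemma~\ref{dense}, which says that the AR/ANR property of $\Cld_H(X)$ depends only on the completion of $X$ (more precisely, passes between a space and any dense over-space). So the first step is to find a metric space $M$ that contains $\Cld_H(X|\{x_0\})$ as a dense subset (or has $\Cld_H(X|\{x_0\})$ dense in it) and whose hyperspace's AR/ANR status is visibly equivalent to that of $\Cld_H(X)$. The natural candidate is to look at $\Cld_H(X|\{x_0\})$ sitting inside $\Cld_H(X)$ itself and relate the two via a retraction: the map $r\colon\Cld_H(X)\to\Cld_H(X|\{x_0\})$, $r(F)=F\cup\{x_0\}$, is a well-defined retraction (it is the identity on $\Cld_H(X|\{x_0\})$), and one checks it is continuous because $d_H(F\cup\{x_0\},G\cup\{x_0\})\le d_H(F,G)$. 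Thus $\Cld_H(X|\{x_0\})$ is a retract of $\Cld_H(X)$, which immediately gives the ``only if'' direction of the easier half: if $\Cld_H(X)$ is an A(N)R, so is any retract, hence so is $\Cld_H(X|\{x_0\})$ — wait, a retract of an ANR need not be an ANR in general, but a retract of an AR is an AR, and a retract of an ANR \emph{is} an ANR provided it is closed; here $\Cld_H(X|\{x_0\})$ is closed in $\Cld_H(X)$. So one direction is essentially free.

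For the converse direction — assuming $\Cld_H(X|\{x_0\})$ is an A(N)R and deducing that $\Cld_H(X)$ is — the idea is to exhibit $\Cld_H(X)$ as (homeomorphic to) a retract, or better a nice subspace, of a hyperspace of the form $\Cld_H(Y|\{y_0\})$ for a suitable $Y$ built from $X$, and then invoke Lemma~\ref{dense} to transfer the property to $\Cld_H(X)$ via density. Concretely, I would adjoin an isolated-at-the-right-scale point: let $Y=X\sqcup\{p\}$ with a metric extending $d$ so that $p$ is at controlled distance from everything (e.g. $\dist(p,x)=1+\diam$-type bound, or simply glue a segment), pick $y_0=p$, and observe that $F\mapsto F\cup\{p\}$ identifies $\Cld_H(X)$ with an open-and-closed piece of $\Cld_H(Y|\{p\})$, namely those members of $\Cld_H(Y|\{p\})$ meeting $X$ — actually every member meets $X$ unless it equals $\{p\}$, so $\Cld_H(X)\cong\Cld_H(Y|\{p\})\setminus\{\{p\}\}$, and removing the single isolated point $\{p\}$ (it is isolated if $p$ is far enough) preserves A(N)R. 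Then Lemma~\ref{dense} (with $X$ dense in $Y$, or a direct argument) links the A(N)R property of $\Cld_H(Y)$ back to that of $\Cld_H(X)$, and the hypothesis feeds in through $\Cld_H(Y|\{p\})$.

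The main obstacle I anticipate is getting the metric on the enlarged space $Y$ exactly right so that simultaneously (i) the point $p$ is genuinely isolated in $\Cld_H(Y|\{p\})$, i.e.\ $\{p\}$ sits at positive $d_H$-distance from every other closed set containing $p$, which needs $p$ uniformly bounded away from $X$ — impossible if $X$ is unbounded and we want a metric space, so one must instead either allow $Y$ to have $p$ at bounded distance and argue that the removed set is still ``collarable'' rather than isolated, or work with a bounded remetrization of $X$ first (legitimate, since A(N)R-ness of $\Cld_H$ is a topological property once we note $\Cld_H$ for a bounded metric is the Hausdorff topology which is metric-independent in the relevant sense). I would therefore handle it by first replacing $d$ with a bounded equivalent metric, at which point $p$ can be put at a fixed large distance, $\{p\}$ becomes isolated in $\Cld_H(Y|\{p\})$, and all the pieces fit; the remaining verifications (continuity of the gluing homeomorphisms, that the bounded remetrization does not change $\Cld_H(X)$ up to homeomorphism, closedness of $\Cld_H(X|\{x_0\})$ in $\Cld_H(X)$) are routine.
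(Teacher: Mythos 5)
Your first half is fine: $r(F)=F\cup\{x_0\}$ is a $1$-Lipschitz retraction of $\Cld_H(X)$ onto $\Cld_H(X|\{x_0\})$, and a retract of an A(N)R is an A(N)R (for the ANR case you do not even need closedness of $\Cld_H(X|\{x_0\})$ in $\Cld_H(X)$; the extension-of-maps characterization of ANRs handles arbitrary retracts, though closedness does hold here). Note that the paper itself offers no proof of this proposition --- it is quoted from the preprint \cite{Voy} --- so your argument can only be judged on its internal soundness.

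The converse direction has a genuine gap: the construction never uses the hypothesis. After the bounded remetrization you attach a point $p$ at uniform positive distance from $X$ and obtain $\Cld_H(Y|\{p\})=\{\{p\}\}\sqcup e(\Cld_H(X))$ with $\{p\}$ isolated; this is correct, but it only shows that $\Cld_H(Y|\{p\})$ is an ANR if and only if $\Cld_H(X)$ is, i.e.\ it restates the goal in terms of a hyperspace anchored at the \emph{new} point $p$. The hypothesis concerns the hyperspace anchored at $x_0\in X$, and nothing in your argument connects $\Cld_H(Y|\{p\})$ with $\Cld_H(X|\{x_0\})$. The appeal to Lemma~\ref{dense} cannot supply that link: $X$ is not dense in $Y=X\sqcup\{p\}$ (the point $p$ is isolated), and in any case Lemma~\ref{dense} compares $\Cld_H$ of a space with that of a dense superspace and says nothing about the anchored hyperspaces $\Cld_H(\cdot\,|\{x_0\})$. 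So the hard implication, ``$\Cld_H(X|\{x_0\})$ is an A(N)R $\Rightarrow$ $\Cld_H(X)$ is an A(N)R,'' is left unproved; moreover, in the AR case your reduction cannot even be formulated, since $\Cld_H(Y|\{p\})$ is disconnected and hence never an AR. Closing this gap requires a different mechanism --- for instance reducing, via the Lawson-semilattice Lemma~\ref{lawson}, to transferring local path-connectedness from $\Cld_H(X|\{x_0\})$ to $\Cld_H(X)$, or invoking the metric characterization of A(N)R-ness of $\Cld_H(X)$ from \cite{BV$_1$} --- which is precisely what the reference \cite{Voy} is cited for.
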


Proposition~\ref{denseat} implies the following fact about
Attouch-Wets hyperspace topology having an independent interest.

\begin{corollary}\label{completion}
Let $X$ be a dense subset of  a metric space $M$. Then, the
hyperspace $\Cld_{AW}(X)$ is an absolute neighborhood retract (an
absolute retract) if and only if so is the hyperspace
$\Cld_{AW}(M)$.
\end{corollary}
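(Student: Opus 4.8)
\textbf{Proof plan for Corollary~\ref{completion}.}

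The plan is to reduce the Attouch-Wets statement for $X\subset M$ to the Hausdorff-metric statement already isolated in Proposition~\ref{denseat} and Lemma~\ref{dense}, using the embedding machinery of Section~4. The first step is to pass from $X$ to its one-point extension. Applying the construction $\alpha(\cdot)$ of Section~4 with the \emph{same} fixed base point $x_0\in X$, we obtain metric spaces $(\alpha X,\rho)$ and $(\alpha M,\rho)$, and since $X$ is dense in $M$ the point $\infty$ is the same for both and $\alpha X$ is a dense subset of $\alpha M$. By Proposition~\ref{embed} and the remark following it, $\Cld_{AW}(X)$ is homeomorphic to $\Cld_H(\alpha X\mid\{\infty\})\setminus\{\infty\}$, and likewise $\Cld_{AW}(M)\cong\Cld_H(\alpha M\mid\{\infty\})\setminus\{\infty\}$. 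So it suffices to show that $\Cld_H(\alpha X\mid\{\infty\})\setminus\{\infty\}$ is an A(N)R if and only if $\Cld_H(\alpha M\mid\{\infty\})\setminus\{\infty\}$ is.

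The second step is to get rid of the removed point $\{\infty\}$ (the singleton $\{\infty\}\in\Cld_H(\alpha X)$, which is the bottom element of the subsemilattice $\Cld_H(\alpha X\mid\{\infty\})$). I would argue that $\{\infty\}$ is an isolated point of $\Cld_H(\alpha X\mid\{\infty\})$: since $\diam(\alpha X)<2$, any $F\in\Cld_H(\alpha X\mid\{\infty\})$ other than $\{\infty\}$ contains a point of $X$, hence a point at $\rho$-distance from $\infty$ bounded below, while $F$ sufficiently $d_H$-close to $\{\infty\}$ cannot — wait, that is not quite right because points of $X$ near infinity are $\rho$-close to $\infty$. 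The correct observation is subtler: one should instead note that $\{\infty\}$ is \emph{not} generally isolated, so rather than deleting it I would invoke the fact that removing or adjoining a single point does not affect the A(N)R property when the point has arbitrarily small contractible (even convex-in-the-semilattice) neighborhoods. Concretely, the sets $\{F\in\Cld_H(\alpha X\mid\{\infty\}):F\subset B_\rho(\infty,\e)\}$ form a neighborhood base at $\{\infty\}$, each is contractible via $t\mapsto F\cup\{\infty\}=F$ — that is degenerate — so instead contract via $F_t=\overline{\{x\in F:\rho(x,\infty)\le t\e\}\cup\{\infty\}}$, giving a deformation to $\{\infty\}$; this shows $\Cld_H(\alpha X\mid\{\infty\})$ is an A(N)R iff $\Cld_H(\alpha X\mid\{\infty\})\setminus\{\infty\}$ is (a standard fact: a space is an A(N)R iff it is so after removing a point with a neighborhood base of contractible open sets, the complement being open and the whole being obtained by attaching an AR along an AR).

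The third and final step chains the two equivalences already available. By Proposition~\ref{denseat} applied in $\alpha X$ and in $\alpha M$, $\Cld_H(\alpha X\mid\{\infty\})$ is an A(N)R iff $\Cld_H(\alpha X)$ is, and $\Cld_H(\alpha M\mid\{\infty\})$ is an A(N)R iff $\Cld_H(\alpha M)$ is. By Lemma~\ref{dense}, since $\alpha X$ is dense in $\alpha M$, $\Cld_H(\alpha X)$ is an A(N)R iff $\Cld_H(\alpha M)$ is. Stringing these together with the reductions of steps one and two yields: $\Cld_{AW}(X)$ is an A(N)R $\iff\Cld_H(\alpha X\mid\{\infty\})$ is $\iff\Cld_H(\alpha X)$ is $\iff\Cld_H(\alpha M)$ is $\iff\Cld_H(\alpha M\mid\{\infty\})$ is $\iff\Cld_{AW}(M)$ is, and the AR statement follows from the same chain since every link preserves "AR" as well as "ANR". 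The main obstacle is step two — cleanly justifying that excising the single point $\{\infty\}$ (equivalently, that the non-closedness of $e(\Cld_{AW}(X))$ in $\Cld_H(\alpha X\mid\{\infty\})$ caused only by the missing bottom element) does not disturb the A(N)R property; everything else is a formal concatenation of the cited results.
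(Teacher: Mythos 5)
Your architecture is exactly the paper's: the published proof of this corollary consists of the single sentence ``It follows from Propositions~\ref{embed}, \ref{denseat} and Lemma~\ref{dense}'', which is precisely your steps one and three. Those steps are fine, including the observation (which the paper leaves implicit) that using the same base point $x_0\in X\subset M$ makes $\alpha X$ an isometrically embedded dense subset of $\alpha M$, so that Lemma~\ref{dense} applies to the pair $\alpha X\subset\alpha M$ and Proposition~\ref{denseat} applies on each side with distinguished point $\infty$.

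Your step two is the one link that is not a formal citation, and you are right to single it out --- the paper says nothing about the deleted singleton $\{\infty\}$. However, the excision principle you invoke to close it (``a space is an A(N)R iff it is so after removing a point with a neighborhood base of contractible open sets'') is not a theorem. The direction ``whole $\Rightarrow$ deleted'' is cheap for ANR, since $\Cld_H(\alpha X\mid\{\infty\})\setminus\{\{\infty\}\}$ is open and open subsets of ANRs are ANRs; but for AR, deleting a point can destroy contractibility (already $[0,1]$ minus an interior point fails), and in the converse direction a base of contractible neighborhoods at the reattached point does not yield local ANR-ness there: by Hanner's theorem being an ANR is a local property, and Borsuk's locally contractible compacta that are not ANRs show that local contractibility is not enough. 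Your explicit homotopy $F_t=\overline{\{x\in F:\rho(x,\infty)\le t\e\}\cup\{\infty\}}$ is also not obviously continuous, since closed balls do not vary continuously in the Hausdorff metric in a general metric space. The tool that actually works here is the one the paper uses in Lemma~\ref{l6}: $\{\infty\}$ is the bottom element of the Lawson semilattice $\Cld_H(\alpha X\mid\{\infty\})$, and the union trick $\gamma'(t)=\gamma(\max\{0,2t-1\})\cup\gamma(\min\{2t,1\})$ pushes any path between two nondegenerate elements off the bottom element while staying in a given subsemilattice neighborhood; this shows the complement of $\{\{\infty\}\}$ is relatively $LC^0$, and Lemma~\ref{lawson} then transfers the A(N)R property in both directions (when $X$ is unbounded, so that the complement is dense; when $X$ is bounded, $\{\infty\}$ is isolated and $d_{AW}$ is equivalent to $d_H$, so the corollary reduces directly to Lemma~\ref{dense}). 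So: you have located the only genuine gap --- one the paper itself glosses over --- but your proposed patch for it does not stand as written and should be replaced by the semilattice argument.
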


\begin{proof}
It follows from the Propositions~\ref{embed},
\ref{denseat} and Lemma~\ref{dense}.
\end{proof}

\section{The completion $\overline{\alpha X}$ is a Peano continuum}
 We need the following lemma proved in \cite[Lemma 2]{SY}.
\begin{lemma}\label{SY}
If $X$ is a locally connected, locally compact separable
metrizable space with no compact components, then its Alexandroff
one-point compactification  $\alpha X$  is a Peano continuum.
\end{lemma}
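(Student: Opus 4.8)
The plan is to prove Lemma~\ref{SY} by a direct verification that the one-point compactification $\alpha X$ is compact, connected, metrizable and locally connected, and then invoke the Hahn--Mazurkiewicz characterization of Peano continua. Compactness is automatic for the Alexandroff compactification of a locally compact Hausdorff space. Metrizability of $\alpha X$ follows because $X$ is separable, locally compact and metrizable, hence $\sigma$-compact, and the one-point compactification of a $\sigma$-compact locally compact metrizable space is metrizable (second countable compact Hausdorff). So the only substantial points are connectedness and local connectedness.

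For connectedness, suppose $\alpha X = U \sqcup V$ with both $U, V$ nonempty, open and closed in $\alpha X$. Exactly one of them, say $V$, contains the point $\infty$; then $U$ is a clopen subset of $\alpha X$ not containing $\infty$, so $U$ is a compact open subset of $X$. Being open in the locally connected space $X$, the set $U$ is a union of connected components of $X$, and being compact it is a finite union of such components, each of which is therefore compact --- contradicting the hypothesis that $X$ has no compact components (and $U \neq \emptyset$ forces at least one). Hence $\alpha X$ is connected. Note this is also where local connectedness of $X$ and the ``no compact component'' hypothesis interact: local connectedness guarantees components are open, so a compact clopen set decomposes into finitely many compact components.

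For local connectedness we check it separately at points of $X$ and at $\infty$. At a point $x \in X$: since $X$ is locally connected and $X$ is open in $\alpha X$, small connected neighborhoods of $x$ in $X$ remain connected neighborhoods in $\alpha X$, so $\alpha X$ is locally connected at $x$. At $\infty$: a basic neighborhood has the form $\alpha X \setminus K$ for $K \subset X$ compact. I would like to find, inside this, a connected neighborhood of $\infty$. The idea is to enlarge $K$ to a larger compact set $K'$ (using local compactness and local connectedness to take a ``connected-neighborhood-hull'') so that $X \setminus K'$ has only finitely many connected components $C_1, \dots, C_m$, none of which has compact closure missing $\infty$ --- more precisely, so that each $C_j \cup \{\infty\}$ is connected in $\alpha X$; then $\big(\bigcup_j C_j\big) \cup \{\infty\} = \alpha X \setminus K'$ is a connected neighborhood of $\infty$ contained in $\alpha X \setminus K$. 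The point that $X \setminus K'$ has finitely many components, and that each has $\infty$ in its closure, uses that no component of $X$ is compact: an unbounded-in-$\alpha X$ component necessarily clusters at $\infty$, while a component of $X \setminus K'$ with compact closure in $X$ would, by local connectedness, give rise to a compact component of $X$ after absorbing it into $K'$.

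The main obstacle I anticipate is precisely the construction of the ``good'' compact enlargement $K'$ at the point $\infty$: one must arrange simultaneously that $X \setminus K'$ has only finitely many components and that each such component, together with $\infty$, is connected in $\alpha X$. The natural tool is to cover $K$ by finitely many connected, relatively compact open sets (available by local compactness plus local connectedness), take $K'$ to be the closure of their union, and then argue that the components of the complement behave well --- but checking that no ``small'' bounded component survives, and that the surviving components really do accumulate at $\infty$, is the delicate bookkeeping. An alternative, cleaner route is to reduce to the connected case by a $\sigma$-compact exhaustion $X = \bigcup_n U_n$ with each $\overline{U_n}$ compact and $\overline{U_n} \subset U_{n+1}$, chosen so that each $U_n$ is a finite union of connected open sets, and then show the nested complements furnish a neighborhood base of connected sets at $\infty$; this is essentially the argument in \cite{SY}, and I would follow it, citing that reference for the routine verifications.
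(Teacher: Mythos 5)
The paper itself does not prove this lemma --- it is quoted verbatim from \cite[Lemma~2]{SY} --- so any direct verification is already more than the paper supplies. Your reduction to Hahn--Mazurkiewicz is the right frame, and the parts you actually carry out are correct: compactness and metrizability of $\alpha X$ are as you say, and your connectedness argument (a nonempty compact clopen subset of $X$ would be a finite disjoint union of open components of $X$, each then compact, contradicting the hypothesis) is exactly right, as is local connectedness at points of $X$.

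The gap is at $\infty$, and it is not just ``delicate bookkeeping'': your stated plan --- enlarge $K$ to a compact $K'$ so that $X\setminus K'$ has \emph{only finitely many} components --- cannot be carried out in general. Take $X=\mathbb N\times\mathbb R$, which satisfies all the hypotheses; for every compact $K'\subset X$ the set $X\setminus K'$ contains all but finitely many of the lines $\{n\}\times\mathbb R$ as distinct components, so it always has infinitely many components. Likewise, a component of $X\setminus K'$ with compact closure does not ``give rise to a compact component of $X$'' (e.g.\ $X=\mathbb R$, $K'=\{0,1\}$, component $(0,1)$); there is no contradiction to extract there, only something to absorb. The correct dichotomy is between components $C$ of $X\setminus K$ with $\overline C$ compact (\emph{bounded}) and the rest. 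Every unbounded component meets every neighborhood of $\infty$ (else it would lie in a compact set), so $N=\{\infty\}\cup\bigcup\{C: C\text{ an unbounded component of }X\setminus K\}$ is connected as a union of connected sets through $\infty$ --- no finiteness is needed here. What must be checked is that $N$ is a \emph{neighborhood} of $\infty$, i.e.\ that the union $B$ of all bounded components is relatively compact. For this, fix a relatively compact open $V\supset K$. A component of $X\setminus K$ whose closure misses $K$ is clopen in $X$, hence is a (non-compact) component of $X$ and is unbounded; so every bounded component meets $V$, and if it is not contained in $V$ it meets the compact set $\partial V\subset X\setminus K$. Since the components are disjoint and open (local connectedness), only finitely many meet $\partial V$; hence all but finitely many bounded components lie in $V$, and $\overline B$ is contained in $\overline V$ together with finitely many compact closures, so it is compact. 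Then $\alpha X\setminus(K\cup\overline B)$ is an open neighborhood of $\infty$ contained in the connected set $N\subset\alpha X\setminus K$. With this replacement your argument closes without deferring the key step back to \cite{SY}.
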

Using the previous lemma we can easily obtain
\begin{lemma}\label{key} Suppose that the completion $\overline{X}$ of a metric space $X$
is a proper locally connected space with  no bounded connected
components. Then, $\overline{\alpha X}$ is a Peano continuum.
\end{lemma}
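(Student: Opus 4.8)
The plan is to realize $\overline{\alpha X}$ as the Alexandroff compactification of a suitable locally compact, locally connected, separable metric space with no compact components, so that Lemma~\ref{SY} applies directly. First I would examine the relationship between $\alpha X$, $\overline{\alpha X}$, and $\alpha\overline X$, the one-point compactification of the completion $\overline X$. Since $\overline X$ is proper, it is locally compact, so $\alpha\overline X$ is already a compact metrizable space, and by Lemma~\ref{SY} (using that $\overline X$ is locally connected with no bounded, i.e.\ no compact, connected components) $\alpha\overline X$ is a Peano continuum. The point of the argument is then that $\overline{\alpha X}$ coincides with $\alpha\overline X$ up to homeomorphism; equivalently, that the metric completion of $(\alpha X,\rho)$ adds exactly the points of $\overline X\setminus X$ together with the single point $\infty$, with the resulting space being $\alpha\overline X$.

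The key steps, in order: (1) Observe that the metric $\rho$ on $\alpha X$ extends in the obvious way to a metric $\overline\rho$ on $\alpha\overline X = \overline X \cup\{\infty\}$, using the same formula with $d$ replaced by its extension to $\overline X$ and $x_0$ regarded as a point of $\overline X$; one must check this formula still defines a metric (the triangle inequality argument is identical to the one for $\rho$ itself, already implicit in the paper). (2) Verify that $(\alpha X,\rho)$ is isometrically embedded as a dense subset of $(\alpha\overline X,\overline\rho)$: density is clear since $X$ is dense in $\overline X$ and $\alpha X \supset X$, and the restriction of $\overline\rho$ to $\alpha X$ is visibly $\rho$. (3) Check that $(\alpha\overline X,\overline\rho)$ is complete: a $\overline\rho$-Cauchy sequence either stays bounded away from $\infty$, in which case it is $d$-Cauchy in a bounded (hence compact, by properness) subset of $\overline X$ and converges there, or it accumulates at $\infty$ and converges to $\infty$. (4) Conclude from (2) and (3) that $(\alpha\overline X,\overline\rho)$ is \emph{the} completion $\overline{\alpha X}$. (5) Finally, note that the topology $\overline\rho$ induces on $\alpha\overline X$ is the one-point-compactification topology on $\overline X\cup\{\infty\}$ — because away from $\infty$ it agrees with the topology of $\overline X$, and the $\overline\rho$-balls around $\infty$ are exactly complements of closed bounded (hence compact) subsets of $\overline X$ — so $\overline{\alpha X}$ is homeomorphic to $\alpha\overline X$, which is a Peano continuum by Lemma~\ref{SY}.

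The main obstacle I anticipate is step (4)/(5): one must be careful that the completion of $(\alpha X,\rho)$ really is all of $\alpha\overline X$ and nothing more, and that its topology is the Alexandroff topology rather than some finer metric topology. The subtlety is that $\rho$ was designed so that a $d$-unbounded sequence in $X$ converges to $\infty$; one needs that properness of $\overline X$ guarantees every $\overline\rho$-Cauchy sequence in $\alpha X$ that does \emph{not} tend to $\infty$ lies eventually in some ball $\{x : d(x,x_0)\le i\}$, whose closure in $\overline X$ is compact, forcing convergence inside $\overline X$. This is where properness of the completion — rather than of $X$ itself — is essential, and it is also where one sees why the hypothesis "no bounded connected component" (equivalently, no compact component of $\overline X$) is exactly what is needed to feed into Lemma~\ref{SY}. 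The remaining verifications (that $\overline\rho$ is a metric, density, agreement of restrictions) are routine.
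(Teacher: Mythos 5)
Your proposal is correct and follows exactly the paper's route: the paper's own proof simply notes that $\overline{X}$ satisfies the hypotheses of Lemma~\ref{SY} and that $\overline{\alpha X}$ coincides with the Alexandroff one-point compactification of $\overline{X}$. Your steps (1)--(5) just supply the (correct) verification of that identification, which the paper asserts without detail.
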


\begin{proof}
Note, that the completion $\overline{X}$ of $X$ satisfies the
conditions of Lemma~\ref{SY} and $\overline{\alpha X}$ (the
completion of $\alpha X$) coincides with the Alexandroff one-point
compactification of $\overline{X}$.
\end{proof}

Then, by the  Curtis-Schori Hyperspace Theorem  \cite{CS},
$\Cld_H(\overline{\alpha X})=\Comp(\overline{\alpha X})$ is
homeomorphic to the Hilbert cube $Q$.

\begin{lemma}\label{cubinfty}
If a metric space $X$ has proper, locally connected completion $\overline{X}$
 having no bounded connected component, then the
hyperspace $\Cld_H(\overline{\alpha X}|\{\infty\})$ is homeomorphic to
the Hilbert cube $Q$.
\end{lemma}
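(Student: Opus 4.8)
The plan is to show that $W:=\Cld_H(\overline{\alpha X}\,|\,\{\infty\})$ is a compact absolute retract and then to apply Toru\'nczyk's topological characterization of the Hilbert cube. Put $Y=\overline{\alpha X}$. By Lemma~\ref{key}, $Y$ is a Peano continuum, so the Curtis--Schori Hyperspace Theorem gives $\Cld_H(Y)=\Comp(Y)\cong Q$; in particular $\Comp(Y)$ is a compact absolute retract. Applying Proposition~\ref{denseat} to the metric space $Y$ with the distinguished point $\infty$, I conclude that $W$ is an absolute retract. Since $W=\{A\in\Comp(Y):\infty\in A\}$ is closed in the compact space $\Comp(Y)$ (if a sequence in $W$ converges to $A$ in the Hausdorff metric, then the distance from $\infty$ to $A$ is dominated by the corresponding Hausdorff distances and hence equals $0$, so $\infty\in A$), the space $W$ is a compact absolute retract.

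By Toru\'nczyk's theorem it now suffices to verify that $W$ has the disjoint cells property: for all $n\in\IN$, $\e>0$ and maps $f,g\colon[0,1]^n\to W$ one should find maps $f',g'\colon[0,1]^n\to W$ that are $\e$-close to $f,g$ respectively and satisfy $f'([0,1]^n)\cap g'([0,1]^n)=\emptyset$. I first observe that $Y$ is a \emph{nondegenerate} Peano continuum in which $\infty$ is not isolated: since $\overline X$ is proper and has no bounded connected component, it has no compact component, hence $\overline X$ is noncompact and infinite; therefore $Y=\overline X\cup\{\infty\}$ has infinitely many points, and, being the one-point compactification of the noncompact space $\overline X$, it does not isolate $\infty$.

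The disjoint cells property is the technical heart, and this is where the real work lies. I would run, parametrically in $t\in[0,1]^n$, the perturbation scheme underlying the proof that the hyperspace of a nondegenerate Peano continuum is a Hilbert cube (Curtis--Schori): using local connectedness, choose a fine finite cover $\{U_1,\dots,U_m\}$ of $Y$ by connected open sets of diameter less than $\e$ and, in each $U_i$, a nondegenerate arc $J_i$, so that $\Comp(J_i)\cong Q$ contains two disjoint topological copies of $[0,1]^n$. One then $\e$-approximates a given compact set $K\in W$ by an \emph{enlargement} $K'\supseteq K$ obtained by adjoining to $K$, within each $U_i$ that $K$ meets, a small connected ``marker'' lying in $J_i$; the markers for $f$ are taken from the first copy of $[0,1]^n$ in $\Comp(J_i)$ and those for $g$ from the second, which forces the resulting images to be disjoint. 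Because $K'$ is an enlargement of $K$, the point $\infty$ is never removed, so $f',g'$ do take values in $W$. The obstacle — the same one as in the compact hyperspace case — is to make the choice of which $U_i$'s are used, and of the markers attached, depend continuously on the parameter $t$ while keeping the perturbation $\e$-small; this is handled by a Michael-type continuous selection. Once this is carried out, Toru\'nczyk's theorem yields $W\cong Q$, that is, $\Cld_H(\overline{\alpha X}\,|\,\{\infty\})\cong Q$, as required.
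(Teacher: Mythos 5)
The first half of your argument is sound: establishing that $W=\Cld_H(\overline{\alpha X}\,|\,\{\infty\})$ is a compact AR via Curtis--Schori and Proposition~\ref{denseat} is a legitimate (if slightly roundabout) variant of the paper's observation that $W$ is a retract of $\Cld_H(\overline{\alpha X})\cong Q$. The problem is the second half. Verifying the disjoint cells property is the entire content of the lemma, and what you offer there is a plan, not a proof: you propose to re-run the Curtis--Schori marker construction parametrically, and you yourself identify the crux --- making the choice of the sets $U_i$ met by $f(t)$ and of the attached markers depend continuously on $t$ --- and then dispose of it with ``this is handled by a Michael-type continuous selection.'' That step is precisely the delicate part of the Curtis--Schori machinery (the set of indices $i$ with $f(t)\cap U_i\neq\emptyset$ is not continuous in $t$, and disjointness of the two images must be readable off from the perturbed sets themselves), so as written the proof has a genuine gap at its technical heart.

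The paper avoids this entirely with a short trick you should compare against. Since $\Fin_H(\overline{\alpha X})$ is homotopy dense in $\Cld_H(\overline{\alpha X})\cong Q$ (the Curtis--Nhu argument), one can take maps $g_i:I^n\to\Fin_H(\overline{\alpha X})$ that are $\e/2$-close to $f_i$, and then set $f_1'=g_1\cup\{\infty\}$ and $f_2'=g_2\cup B(\infty,\e/2)$. Both land in $W$ and are $\e$-close to $f_1,f_2$. Disjointness is then automatic for a crude cardinality reason: every value of $f_1'$ is a finite set, while every value of $f_2'$ contains the ball $B(\infty,\e/2)$, which is infinite because $\overline{X}$ is unbounded and hence $\infty$ is not isolated in $\overline{\alpha X}$ --- exactly the nondegeneracy observation you made but did not exploit. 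No parametric marker selection is needed. If you want to salvage your write-up, either carry out the selection argument in full (essentially reproving a chunk of Curtis--Schori) or replace it with this finite-versus-infinite perturbation.
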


\begin{proof}
Observe that $\Cld_H(\overline{\alpha X}|\{\infty\})$ is a retract of
$\Cld_H(\overline{\alpha X})$, and thus is a compact absolute
retract. Then, we use the Characterization Theorem for the Hilbert
cube, see \cite[Theorem 1.1.23]{BRZ}. By this theorem we have to
check that for each $\varepsilon >0$, every $n\in\IN$, and each
maps $f_1, f_2 : I^n \rightarrow \Cld_H(\overline{\alpha
X}|\{\infty\})$ there are maps $f_1', f_2' : I^n \rightarrow
\Cld_H(\overline{\alpha X}|\{\infty\})$ such that $d(f_i,
f_i')<\varepsilon$, $i=1,2$, and $f_1'(I^n)\cap
f_2'(I^n)=\emptyset$. Fix $\varepsilon>0$, $n\in\IN$, and maps
$f_1, f_2 : I^n \rightarrow \Cld_H(\overline{\alpha X}|\{\infty\})$.
By the argument of \cite{CN} we can show that
$\Fin_H(\overline{\alpha X})$ is homotopy dense in
$\Cld_H(\overline{\alpha X})$. Therefore, we can find an
$\varepsilon/2$-close to $f_i$ map
$g_i:I^n\rightarrow\Fin_H(\overline{\alpha X})$, $i=1,2$,
respectively, see \cite[Ex. 1.2.10]{BRZ}. Observe, that
$d(f_i,g_i\cup\{\infty\})<\varepsilon/2$, $i=1,2$. Then, it is
easily seen that maps $f_1'=g_1\cup\{\infty\}$ and $f_2'=g_2\cup
B(\infty,\varepsilon/2)$ are as required.
\end{proof}

\section{Proof of Theorem 2}
To prove the ``only if'' part, assume that $\Cld_{AW}(X)$  is a
separable absolute retract. The separability of $\Cld_{AW}(X)$
implies that each bounded subset of $X$ is totally bounded
\cite[Theorem 5.2]{BKS}, which is equivalent to the properness of
the completion $\overline{X}$ of $X$. By
Corollary~\ref{completion}, the hyperspace
$\Cld_{AW}(\overline{X})$  is a separable absolute retract too. In
this case $\Cld_{AW}(\overline{X})=\Cld_F(\overline{X})$ (by
$\Cld_F(X)$ we denote the hyperspace $\Cld(X)$ endowed with the
Fell topology, see \cite[Theorem 5.1.10 ]{Be}) is an absolute
retract, and we can apply \cite[Propositions 1, 2]{SY} to conclude
that the locally compact space $\overline{X}$ is locally connected
and contains no bounded (=compact) connected component.

Next, we prove the ``if'' part of Theorem~\ref{th2}. Assume that
the completion $\overline{X}$ of $X$ is proper, locally connected
with no bounded connected components. By Lemma~\ref{cubinfty}, the space
  $\Cld_H(\overline{\alpha X}|\{\infty\})$ is homeomorphic to the Hilbert cube $Q$.
Proposition~\ref{complete} implies that $\Cld_H(\alpha
X|\{\infty\})$ is
 homotopy dense in $\Cld_H(\overline{\alpha X}|\{\infty\})$.
  Taking into account that the Hilbert cube with deleted point is an absolute retract
   and so is any homotopy dense subset of $Q\setminus\{pt\}$, we
    conclude that $\Cld_H(\alpha X|\{\infty\})\setminus\{\infty\}$ and its
     topological copy $\Cld_{AW}(X)$ are absolute retracts.

\section{Proof of Theorem 1}
The ``only if'' part. If $\Cld_{AW}(X)$ is homeomorphic to
$\ell_2$, then $X$ is
 topologically complete by \cite{Co}. The total boundedness
of each bounded subset of $X$ follows from \cite[Theorem
5.2]{BKS}. Since $\ell_2$ is a separable absolute retract, we may
apply Theorem~\ref{th2} to conclude that the completion
$\overline{X}$ of $X$ is locally connected and contains no bounded
connected component. It remains to show that $X$ is not locally
compact at infinity. Assume the contrary, i.e., there exists a
bounded subset $B\subset X$ with locally compact complement in
$X$. Then it is easily seen that the point $\infty\in\alpha X$ has
an open neighborhood with compact closure. Whence, we can find a
compact neighborhood of $\{\infty\}$ in $\Cld_{H}(\alpha
X|\{\infty\})$. But this is impossible because of the nowhere locally
compactness of the Hilbert space $\ell_2$. This proves the ``only
if'' part of Theorem~\ref{th1}.

To prove the ``if'' part, assume that $X$ is topologically
complete, not locally compact at infinity and the completion
$\overline{X}$ of $X$ is proper, locally connected with no bounded
connected components. By Proposition~\ref{embed}, we identify
$\Cld_{AW}(X)$ with the subspace  $\Cld_H(\alpha X|\{\infty\})\setminus\{\infty\}$ of
$\Cld_H(\alpha X|\{\infty\})$. By Lemma~\ref{cubinfty}, the hyperspace
$\Cld_H(\overline{\alpha X}|\{\infty\})$ is homeomorphic to $Q$. Now consider the map
$e:\Cld_H(\alpha X|\{\infty\})\to\Cld_H(\overline{\alpha X}|\{\infty\})$
assigning to each closed subset $F\subset \alpha X$ its closure
$\overline{F}$ in $\overline{\alpha X}$ and note that this map is
an isometric embedding, which allows us to identify the hyperspace
$\Cld_{AW}(X)$ with the subspace $\{F\in\Cld_H(\overline{\alpha
X}|\{\infty\}):F=\cl(F\cap \alpha X)\}$ of $\Cld_H(\overline{\alpha
X}|\{\infty\})$. It is easy to check that this subspace is dense and
relatively complete in the Lawson semilattice
$\Cld_H(\overline{\alpha X}|\{\infty\})$. Then it is homotopically
dense in $\Cld_H(\overline{\alpha X}|\{\infty\})$ by
Proposition~\ref{complete} and Lemma~\ref{lawson}. The subset
$\Cld_{AW}(X)$, being topologically complete, is a
$G_\delta$-set in $\Cld_H(\overline{\alpha X}|\{\infty\})$.  The dense
subsemilattice $L=\Fin_H(\overline{\alpha X}|\{\infty\}) \setminus
\Fin_H(\alpha X|\{\infty\})$  is homotopy dense in
$\Cld_H(\overline{\alpha X}|\{\infty\})$, since $X$ is not locally
compact at infinity. Since $L\cap\Cld_{AW}(X)=\emptyset$, we get that $\Cld_{AW}(X)$ is a
homotopy dense $G_\delta$-subset in $\Cld_H(\overline{\alpha
X}|\{\infty\})$ with homotopy dense complement. Applying
Lemma~\ref{tool} we conclude that the space $\Cld_{AW}(X)$ is
homeomorphic to $\ell_2$.

\end{document}